\documentclass{amsart}

\usepackage{amssymb,amsmath,amscd,graphicx,amsthm,color,enumerate}

\usepackage{pst-node}
\usepackage{tikz-cd}

\newtheorem{theorem}{Theorem}
\newtheorem{lemma}[theorem]{Lemma}
\newtheorem{proposition}[theorem]{Proposition}

\theoremstyle{definition}

\theoremstyle{remark}
\newtheorem{remark}[theorem]{Remark}
\newtheorem{example}[theorem]{Example}
\numberwithin{equation}{section}
\numberwithin{theorem}{section}

\def\A{{\mathcal A}}
\def\AA{{\mathbb A}}

\def\C{{\mathbb C}}
\def\CC{{\mathcal C}}

\def\G{{\mathcal G}}
\def\GCC{{\G\CC}}

\def\P{{\mathcal P}}

\def\UU{\overline{\A}}

\def\Z{{\mathbb Z}}

\def\bfG{{\mathbf \Gamma}}
\def\bfGr{\bfG^{{\rm r}}}
\def\bfGc{\bfG^{{\rm c}}}
\def\gammar{\gamma^{\rm r}}
\def\gammac{\gamma^{\rm c}}

\def\ttf{{\tt f}}
\def\ttg{{\tt g}}
\def\tth{{\tt h}}
\def\ttp{{\tt p}}
\def\ttq{{\tt q}}
\def\ttt{{\tt t}}
\def\tty{{\tt y}}

\def\ttF{{\tt F}}
\def\ttG{{\tt G}}
\def\ttH{{\tt H}}
\def\ttP{{\tt P}}
\def\ttT{{\tt T}}

\def\Gr{\operatorname{Gr}}

\def\Id{{\operatorname {Id}}}

\def\Mat{\operatorname{Mat}}

\def\Poi{{\{\cdot,\cdot\}}}
\def\Poip{\Poi_{\rm p}}

\def\nar{\setlength\arraycolsep{2pt}}

\def\:{{:\ }}

\begin{document}

\title{Regular pullback of generalized cluster structures}

\author{Misha Gekhtman}
\address{Department of Mathematics, University of Notre Dame, Notre Dame,
IN 46556}
\email{mgekhtma@nd.edu}

\author{Michael Shapiro}
\address{Department of Mathematics, Michigan State University, East Lansing,
MI 48823}
\email{mshapiro@math.msu.edu}

\author{Alek Vainshtein}
\address{Department of Mathematics \& Department of Computer Science, University of Haifa, Haifa,
Mount Carmel 31905, Israel}
\email{alek@cs.haifa.ac.il}

\begin{abstract}
We consider the problem of lifting a regular cluster structure on a quasi-affine variety to the ambient affine space and a similar problem of defining a regular pullback of a regular cluster structure under a dominant rational map between affine spaces. We provide sufficient conditions for the existence of the corresponding object, called an almost-cluster structure, study its combinatorics, compatible Poisson bracket and the corresponding upper cluster algebra.  
\end{abstract}

\subjclass[2020]{13F60}
\keywords{cluster algebra, quasi-affine variety}

\maketitle

\medskip

\section{Introduction}

 In recent years, we studied cluster structures on simple Lie groups and their homogeneous spaces that are compatible with (homogeneous) Belavin--Drinfeld Poisson--Lie brackets, see, e.g.,~\cite{GSVple} and references therein. While working toward a general construction, we encountered in~\cite{currentGSV} the following two situations: (i) a cluster-like structure on a Lie group arises as the pullback of a compatible (generalized) cluster structure along a dominant rational map from a Poisson affine space; (ii) a cluster-like structure on a Poisson affine space arises as the pullback of a compatible (generalized) cluster structure on a Lie group embedded in this affine space as a quasi-affine variety.
It turns out that these two procedures share many common features, and 
we expect that this pullback mechanism is useful well beyond the particular example considered in~\cite{currentGSV}. For this reason, we present here the general construction of a \emph{regular pullback\/}.

To explain the idea, we begin with a representative special case. Let
$\Psi:\AA^{r}\dashrightarrow \AA^{s}$ be a dominant rational map, and assume that $\AA^{s}$ carries a \emph{regular\/} generalized cluster structure $\CC$, meaning that every cluster variable in every cluster is a regular function on 
$\AA^s$. Fix affine coordinate systems $x_1,\dots,x_r$ on $\AA^{r}$ and $y_1,\dots,y_s$ on $\AA^{s}$, then every cluster variable 
$f_l$ can be rewritten as a rational function in $x_1,\dots,x_r$, and is not regular on $\AA^r$.
 Our goal is to construct on $\AA^{r}$ an analogue  of a regular generalized cluster structure, which we denote
 $\widehat{\CC}$.
Instead of looking at $\Psi^*f_l$, we take for cluster variables their numerators, which are again regular functions. Additionally, new frozen cluster variables are irreducible factors from the denominators of $\Psi^*f_l$. 
The exchange relations in $\widehat{\CC}$ are obtained by pulling back the exchange relations in $\CC$, rewriting the result over a common denominator, and then taking the numerator.

The key point is that the resulting pullbacks of generalized exchange relations may differ from standard generalized cluster relations. In both settings, the right-hand side is a polynomial whose monomials are linearly ordered. In a usual generalized cluster relation, the lowest and highest monomials are coprime; in a pullback relation, these extreme monomials need not be coprime.
Although in both cases the mutation of relations can be encoded via mutations of an exchange quiver, the quiver mutation rules arising from cluster pullback can be substantially more involved, see Section~\ref{combin} for details.

The construction of cluster pullback is given in Section~\ref{prelim}. In Section~\ref{maincond} we formulate a necessary condition for the pullback to be well-defined. Section~\ref{combin} provides a combinatorial description of the mutation rules. In Section~\ref{poisson} we discuss compatibility of Poisson brackets on $\AA^{r}$ and $\AA^{s}$. Finally, in Section~\ref{ucla} we give sufficient conditions ensuring that the pullback $\widehat{\A}$ of the upper cluster algebra is regular and complete, that is, $\widehat{\A}=\C[\AA^{r}]$.

Our research was supported in part by the NSF research grants DMS \#2100785 (M.~G.) and  DMS \#2100791 (M.~S.), and ISF grant \#2848/25 (A.~V.).

\section{General question} \label{prelim}

Let $G=\{g_i\}_{i\in I}$ be a finite collection of distinct irreducible polynomials on an $s$-dimensional affine space $\AA^s$ with coordinates $y_1,\dots,y_s$. Each $g_i$ defines an irreducible divisor $D_i=\{g_i=0\}$. Let $V=\AA^s\setminus D_V$ with $D_V=\cup_{i\in I}D_i$ be the quasi-affine variety defined by the collection $\{D_i\}_{i\in I}$, so that any
regular function $f$ on $V$ can be written in a unique way as 
\begin{equation}\label{regonV}
f=\hat f\prod_{i\in I}g_i^{-\lambda_i(f)}
\end{equation}
 where $\hat f\in\C[y_1,\dots,y_s]$ is a  polynomial not divisible by any of $g_i$ and $\lambda_i(f)\in\Z$, $i\in I$.
 Note that here and in what follows we allow for negative values of $\lambda_i(f)$.

Further, let $r\ge s$ and let $\Psi\:\AA^r\dashrightarrow\AA^s$ be a dominant rational map from an $r$-dimensional affine space $\AA^r$ with coordinates $x_1,\dots,x_r$ to $\AA^s$ satisfying the following condition: there exists a family of $r-s$ irreducible algebraically independent polynomials $\ttH=\{\tth_j\}_{j\in J}$ in $\C[x_1,\dots,x_r]$
such that for every $y_m$, $m\in[1,s]$, one can write the pullback of $y_m$ 
as $\Psi^*y_m=\tty_m\prod_{j\in J}\tth_j^{-\kappa_{j}(y_m)}$ with $\tty_m\in\C[x_1,\dots.x_r]$ not divisible by any of $\tth_j$ and $\kappa_{j}(y_m)\in\Z$, $j\in J$; (it is possible that for some $j\in J$, $\kappa_j(y_m)=0$ for all 
$m\in[1,s]$).
Consequently, for any polynomial $p\in\C[y_1,\dots,y_s]$ there exists a unique polynomial $\ttp\in\C[x_1,\dots,x_r]$ not divisible by any of $\tth_j$ such that
\begin{equation}\label{polypull}
\Psi^*p=\ttp\prod_{j\in J}\tth_j^{-\kappa_{j}(p)}
\end{equation}
 with $\kappa_{j}(p)\in\Z$, $j\in J$. 
In particular, such a polynomial for $g_i$ is denoted $\ttg_i$, $i\in I$.
Polynomials $\{\ttg_i\}_{i\in I}$ and $\{\tth_j\}_{j\in J}$ are called {\it distinguished}.
In what follows it will be convenient to refer to the family of distinguished polynomials as 
$\{\ttq_k\}_{k\in K}$ with $K=I\cup J$, so that $\ttq_k=\ttg_k$ for $k\in I$ and $\ttq_k=\tth_k$ for $k\in J$. 

Assume that $\CC$ is a regular (generalized) cluster structure on $V$, and $F\cup G=\{f_l\}_{l=1}^s$ is an initial cluster for $\CC$ that includes $\{g_i\}_{i\in I}$ as frozen variables (there may be other frozen variables in $F$ as well).  It follows from~\eqref{regonV},~\eqref{polypull} that for $f_l\in F$ 
\begin{equation}\label{regpull}
\Psi^*f_l=\hat\ttf_l\prod_{k\in K}\ttq_k^{-\lambda_k(f_l)}.
\end{equation}
Here $\hat\ttf_l\in\C[x_1,\dots,x_r]$ is not divisible by any of $\ttq_k$, exponents $\lambda_k(f_l)$ for $k\in I$ are defined 
in~\eqref{regonV}, and exponents $\lambda_k(f_l)\in\Z$ for $k\in J$ can be written
via $\lambda_i(f_l)$, $\kappa_j(g_i)$, and $\kappa_j(\hat f_l)$ defined above; however, the explicit 
formula for $\lambda_k(f_l)$ is not essential for further considerations.

Our goal is to study the behavior of functions $\hat\ttf_l$ under transitions between the clusters of $\CC$ directly,
without tracing the behavior of original functions $f_l$.
 We will prove that under certain mild restrictions, given an initial seed $\Sigma=(Q,F\cup G,\P)$ for $\CC$, one can define a seed $\Psi^*\widehat{\Sigma}=(\Psi^*\widehat Q, \widehat{\tt F}\cup\ttG\cup\ttH, \Psi^*\widehat\P)$, mutations for the quiver $\Psi^*\widehat Q$, and exchange relations for the almost-cluster variables in $\widehat {\tt F}=\{\hat \ttf_l\}$, thus giving rise to a (generalized) almost-cluster structure $\Psi^*\widehat{\CC}$ on $\AA^r$.
When $G=\varnothing$ we occasionally drop the hat notation and speak of 
$\Psi^*{\Sigma}=(\Psi^* Q, {\tt F}\cup\ttH, \Psi^*\P)$ with $\ttF=\{\ttf_l\}$, and when $\Psi$ is trivial, and hence 
$\ttH=\varnothing$, we occasionally drop the $\Psi$ notation and speak of  $\widehat{\Sigma}=(\widehat Q, \widehat{F}\cup G, \widehat\P)$.

We start from two similarly looking examples.

\begin{example} \label{goodex}
Assume that $s=4$ and $I=\varnothing$, so that $V=\AA^4$ with coordinates $y_1, y_2, y_3, y_4$. Define a cluster structure $\CC$ on $V$ 
by the initial cluster $\{f_1=y_1, f_2=y_2, f_3=y_3, f_4=y_1y_4-y_2y_3\}$ with frozen variables $f_2, f_3, f_4$ and a unique exchange relation $f_1f_1'=f_2f_3+f_4$, so that $f_1'=y_4$. Clearly, $\CC$ has only two distinct seeds and is regular. Further, consider $\AA^5$ with coordinates $x_1,\dots,x_5$ and a map $\Psi\:\AA^5\to\AA^4$ given by
$\Psi^*y_1=x_1/x_5$ and $\Psi^*y_i=x_i$ for $i=2,3,4$. Assume that $|J|=1$, and the only distinguished polynomial is 
$\tth=x_5$, so that $\ttf_i= x_i$ for $i=1,2,3$, $\ttf_1'=x_4$, $\ttf_4=  x_1x_4-x_2x_3x_5$ and 
$\lambda(f_1)=\lambda(f_4)=1$, $\lambda(f_2)=\lambda(f_3)=\lambda(f_1')=0$. Consequently, the exchange relation for 
$f_1$ 
can be rewritten as $(\ttf_1/\tth) \ttf_1'= \ttf_2 \ttf_3 + \ttf_4/\tth$, which yields  
$ \ttf_1 \ttf_1'= \ttf_2 \ttf_3 \tth+ \ttf_4$. The latter is a usual exchange relation for a cluster structure 
$\Psi^*{\CC}$ on $\AA^5$ with frozen variables $\ttf_2,  \ttf_3, \ttf_4$ and~$\tth$.
\end{example}

\begin{example} \label{badex}
In this example $V$ and $\CC$ are the same as in Example~\ref{goodex} and the map $\Psi\:\AA^5\to\AA^4$ is given by
$\Psi^*y_2=x_2/x_5$ and $\Psi^*y_i=x_i$ for $i=1,3,4$. The only distinguished polynomial is the same $\tth=x_5$, so that 
$\ttf_i= x_i$ for $i=1,2,3$, $\ttf_1'=x_4$, $\ttf_4=  x_1x_4x_5-x_2x_3$ and $\lambda(f_2)=\lambda(f_4)=1$, 
$\lambda(f_1)=\lambda(f_3)=\lambda(f_1')=0$. Consequently, the exchange relation for $f_1$ 
can be rewritten as $\ttf_1 \ttf_1'=(\ttf_2/\tth) \ttf_3 + \ttf_4/\tth$, which does not yield any cluster-like exchange relation for $\ttf_1$ and $\ttf_1'$.

One may try to fix this problem by considering $\bar \ttf_1'=\tth \ttf_1'$ that satisfies the cluster exchange relation 
$ \ttf_1\bar \ttf_1'= \ttf_2\ttf_3 +\ttf_4$. However, this construction is not canonical: one can consider instead 
$\bar \ttf_1=\tth\ttf_1$ that satisfies the cluster exchange relation $\bar \ttf_1 \ttf_1'= \ttf_2 \ttf_3 + \ttf_4$. Note that not only the corresponding cluster structures on $\AA^5$ are different, but the corresponding upper cluster algebras are different as well. Indeed, in the first case we have 
\begin{equation*}
\begin{aligned}
\UU_1&=\C[x_1^{\pm1},x_2,x_3,x_5,x_1x_4x_5-x_2x_3]\cap\C[(x_4x_5)^{\pm1},x_2,x_3,x_5,x_1x_4x_5-x_2x_3]\\
&=\C[x_1^{\pm1},x_2,x_3,x_5,x_4x_5]\cap\C[(x_4x_5)^{\pm1},x_1, x_2,x_3,x_5]\\
&=\C[x_1,x_2,x_3,x_5,x_4x_5],
\end{aligned}
\end{equation*}
so that $x_4\notin\UU_1$, while in the second case a similar computation gives
\begin{equation*}
\begin{aligned}
\UU_2&=\C[(x_1x_5)^{\pm1},x_2,x_3,x_5,x_1x_4x_5-x_2x_3]\cap\C[x_4^{\pm1},x_2,x_3,x_5,x_1x_4x_5-x_2x_3]\\
&=\C[x_2,x_3,x_4,x_5,x_1x_5],
\end{aligned}
\end{equation*}
and hence $\UU_1\ne\UU_2$.
 \end{example}

Example~\ref{badex} illustrates a general problem. By definition, an exchange relation for an arbitrary pair of adjacent seeds $\Sigma, \Sigma'$ in $\CC$ reads
$f_m^\Sigma f_m^{\Sigma'}=P_m^{\Sigma,\Sigma'}$ with $P_m^{\Sigma,\Sigma'}\in \C[\{f_l^\Sigma\}_{l\ne m},\{g_i^{\pm1}\}_{i\in I}]$,  where $f_m^\Sigma$ and $f_m^{\Sigma'}$ are cluster variables in $\Sigma$ and $\Sigma'$, respectivelly, and $P_m^{\Sigma,\Sigma'}$ is the right hand side of the corresponding exchange relation. This relation yields  
\begin{equation}\label{liftedexch}
\frac{\hat \ttf_m^\Sigma\hat\ttf_m^{\Sigma'}}{\prod_{k\in K}\ttq_k^{\lambda_k(f_m^\Sigma)+\lambda_k(f_m^{\Sigma'})}}
=\frac{\tilde {\tt P}_m^{\Sigma,\Sigma'}}{\prod_{k\in K}\ttq_k^{\Lambda_k(P_m^{\Sigma,\Sigma'})}}
\end{equation}
with $\tilde{\tt P}_m^{\Sigma,\Sigma'}\in\C\left[\{\hat\ttf_l^\Sigma\}_{l\ne m},\{\ttg_i\}_{i\in I}, \{\tth_j\}_{j\in J} \right]$ not divisible by any of $\ttq_k$. 
We say that there exists a {\it coherent\/} pullback of $\CC$ if for any seed $\Sigma$ and any mutable index $m$ 
the Laurent monomial
\begin{equation}\label{M}
M=\prod_{k\in K}\ttq_k^{\lambda_k(f_m^\Sigma)+\lambda_k(f_m^{\Sigma'})-\Lambda_k(P_m^{\Sigma,\Sigma'})}
\end{equation}
equals~1 and the family $\widehat\ttF^\Sigma\cup\ttG\cup\ttH$ is algebraically independent on $\AA^r$.

 As is shown in the next example, the notion of coherency is rather subtle: it might happen that $M=1$ for all exchange relations in some seed $\Sigma$, and $M\ne 1$ for an exchange relation in an adjacent seed $\Sigma'$.

\begin{example} \label{badex2}
Assume that $s=3$ and $I=\varnothing$, so that $V=\AA^3$ with coordinates $y_1, y_2, y_3$. 
Define a cluster structure $\CC$ on $V$ 
by the initial cluster $\{f_1=y_1, f_2=y_1^2+y_1y_2-1, f_3=y_3f_2^2-y_1-y_2\}$ with the frozen variable $f_3$ and  exchange relations $f_1f_1'=1+f_2$ and $f_2f_2'=1+f_1f_3$, so that $f_1'=y_1+y_2$ and $f_2'=y_1y_3f_2-1$. Since $\CC$
is of rank~2 with one frozen variable, it has exactly six distinct cluster variables: $f_1, f_2, f_3, f_1', f_2'$ listed above and one more variable 
$f_2''$ that satisfies the exchange relation $f_2f_2''=f_1'+f_3$ in the cluster adjacent to the initial one in 
direction~1, so that $f_2''=y_3f_2$. We thus see that $\CC$ is a regular cluster structure on $V$.

Further, consider $\AA^4$ with coordinates $x_1,\dots,x_4$ and a map $\Psi\:\AA^4\to\AA^3$ given by
$\Psi^*y_i=1+x_ix_4$ for $i=1,2$ and $\Psi^*y_3=x_3x_4$. Assume that $|J|=1$, and the only distinguished polynomial is 
$\tth=x_4$, so that $\Psi^*f_i=\ttf_i$ for $i=1,2,3$, $\Psi^*f_i'=\ttf_i'$ for $i=1,2$, and 
$\Psi^*f_2''=\ttf_2''\tth$. The right hand sides of exchange relations in the initial cluster are $P_1=1+f_2$ and 
$P_2=1+f_1f_3$, so that $\tilde{\tt P}_1=1+\ttf_2$ and $\tilde{\tt P}_2=1+\ttf_1\ttf_3$. Consequently, $M$
defined via~\eqref{M} equals~1. However, in the cluster adjacent to the initial one in 
direction~1, the right hand side of the new exchange relation for $f_2$ is $P_2''=f_1'+f_3$, so that 
$\tilde{\tt P}''_2=\ttf_1'+\ttf_3$, and hence~\eqref{M} yields 
$M=1/\tth$, and so coherence is violated.
\end{example}

The last example in this section exhibits the situation when $I\ne\varnothing$ and the cluster structure on $V$ is generalized. This example is important for the study of (generalized) cluster structures on $GL_n$ compatible with R-matrix Poisson brackets. Such brackets are classified by pairs of 
Belavin--Drinfeld triples $\bfG= (\Gamma_1,\Gamma_2,\gamma)$, where $\Gamma_1$ and $\Gamma_2$ 
are subsets of the set of positive simple roots in the $A_{n-1}$ root system and $\gamma$ 
is a nilpotent isometry (see~\cite{GSVple} for details). In~\cite{GSVple} we treated the subclass
of Belavin--Drinfeld triples that we called aperiodic, which leads to ordinary cluster structures on $GL_n$. 
The first instance of a periodic
Belavin--Drinfeld triple is treated in~\cite[Section~6]{GSVpest} as an application of the theory of periodic staircase matrices. It gives a generalized cluster structure on $GL_6$. The example below presents a similar construction in $GL_3$.

\begin{example} \label{cgacg3}
 Let $V=GL_3$ with coordinates $X=(x_{ij})_{i,j=1}^3$ given by the condition $\det X\ne 0$ in 
$\Mat_3=\AA^9$, so that $|I|=1$. The pair of Belavin--Drinfeld triples $\bfGr,\bfGc$ is given by 
\[
\Gamma_1^{\rm r}=\{\alpha_1\},\  \Gamma_2^{\rm r}=\{\alpha_2\}, \ \gammar(\alpha_1)=\alpha_2;\qquad
\Gamma_1^{\rm c}=\{\alpha_2\},\ \Gamma_2^{\rm c}=\{\alpha_1\}, \ \gammac(\alpha_2)=\alpha_1.
\]
The construction of~\cite{GSVple} produces the periodic matrix
\[
\nar\begin{bmatrix}
\dots &\dots  &\dots  &\dots  &\dots &\dots &\dots &\dots &\dots &\dots &\dots &\dots\\
\dots & x_{31}& x_{32}& x_{33}& 0    & 0    &0     & 0    & 0    & 0    & 0    & \dots \\
\dots & 0     & x_{11}& x_{12}& x_{13}& 0   & 0    &0     & 0    & 0    & 0    & \dots   \cr
\dots & 0     & x_{21}& x_{22}& x_{23}&x_{11}&x_{12}&x_{13} & 0  & 0    & 0    & \dots   \cr
\dots & 0     & x_{31}& x_{32}& x_{33}&x_{21}&x_{22}&x_{23} & 0  & 0    & 0    & \dots   \cr
\dots & 0     & 0     & 0     & 0     &x_{31}&x_{32}&x_{33} & 0  & 0    & 0    & \dots   \cr
\dots & 0     & 0     & 0     & 0     & 0    &x_{11}&x_{12} &x_{13}& 0  & 0    & \dots   \cr
\dots & 0     & 0     & 0     & 0     & 0    &x_{21}&x_{22} &x_{23}&x_{11}&x_{12}& \dots   \cr
\dots & 0     & 0     & 0     & 0     & 0    &x_{31}&x_{32} &x_{33}&x_{21}&x_{22}& \dots   \cr
\dots & 0     & 0     & 0     & 0     & 0    &0     &0      &0     &x_{31}&x_{32}& \dots   \cr
\dots &\dots  &\dots  &\dots  &\dots  &\dots &\dots &\dots  &\dots &\dots &\dots &\dots
\end{bmatrix}
\]
The corresponding characteristic polynomial is 
\[
\det\begin{bmatrix}
x_{13} & 0     & tx_{11}      & tx_{12}       \\
x_{23} & x_{11}& x_{12}+tx_{21}&x_{13}+tx_{22}\\
x_{33} & x_{21}& x_{22}+tx_{31}&x_{23}+tx_{32}\\
0      & x_{31}& x_{32}        &x_{33}
\end{bmatrix}=x_{13}\det X-tc(X)+t^2x_{31}\det X,
\]
and the core (see~\cite{GSVpest} for definitions) is a $5\times 5$ matrix
\[
\Phi=\begin{bmatrix} 
x_{23} & x_{11} & x_{12} & x_{13} & 0     \\
x_{33} & x_{21} & x_{22} & x_{23} & 0     \\
0      & x_{31} & x_{32} & x_{33} & 0     \\
0      & 0      & x_{21} & x_{22} & x_{23}\\
0      & 0      & x_{31} & x_{32} & x_{33}
\end{bmatrix}.
\]
The initial cluster $F_3=\{f_l\}_{l=1}^9$ consists of five mutable variables $f_i=\det\Phi_{[i,5]}^{[i,5]}$, $i\in[1,5]$, and four frozen variables $f_6=x_{31}$, $f_7=x_{13}$, $f_8=\det X$, $f_9=c(X)$. The initial quiver $Q_3$ is shown in 
Fig.~\ref{cgacgpestq}a); each variable $f_i$, $i\in[1,8]$, is assigned to vertex~$i$. Vertex~1 is special and has multiplicity~2. Vertex~9 is isolated and is not shown in Fig.~\ref{cgacgpestq}; the corresponding function $f_9$ enters only the middle coefficient in the generalized exchange relation at vertex~1, which reads
$f_1f_1'=f_2^2f_7+f_9f_2f_5+f_8^2f_5^2f_6$. Note that the middle coefficient in the latter exchange relation is not 
$f_9$, but rather $f_9/f_8$, so that the correct way to write  it is $f_1f_1'=f_2^2f_7+(f_9/f_8)f_2f_8f_5+f_8^2f_5^2f_6$. Using~\cite[Theorem~3.2]{GSVpest} together with the standard Desnanot--Jacobi identity and short 
Pl\"ucker relations one can prove that the generalized cluster structure 
$\GCC_3$ on $GL_3$ defined by the cluster $F_3$, quiver
$Q_3$, and string $(1, f_9/f_8,1)$ is regular.  Moreover, a direct check with Maple shows that the 
corresponding upper cluster algebra over $\C[f_6,f_7,f_9, f_8^{\pm1}]$ coincides with $\C[GL_3]$, and hence
$\GCC_3$ is complete.

\begin{figure}[ht]
\begin{center}
\includegraphics[width=8cm]{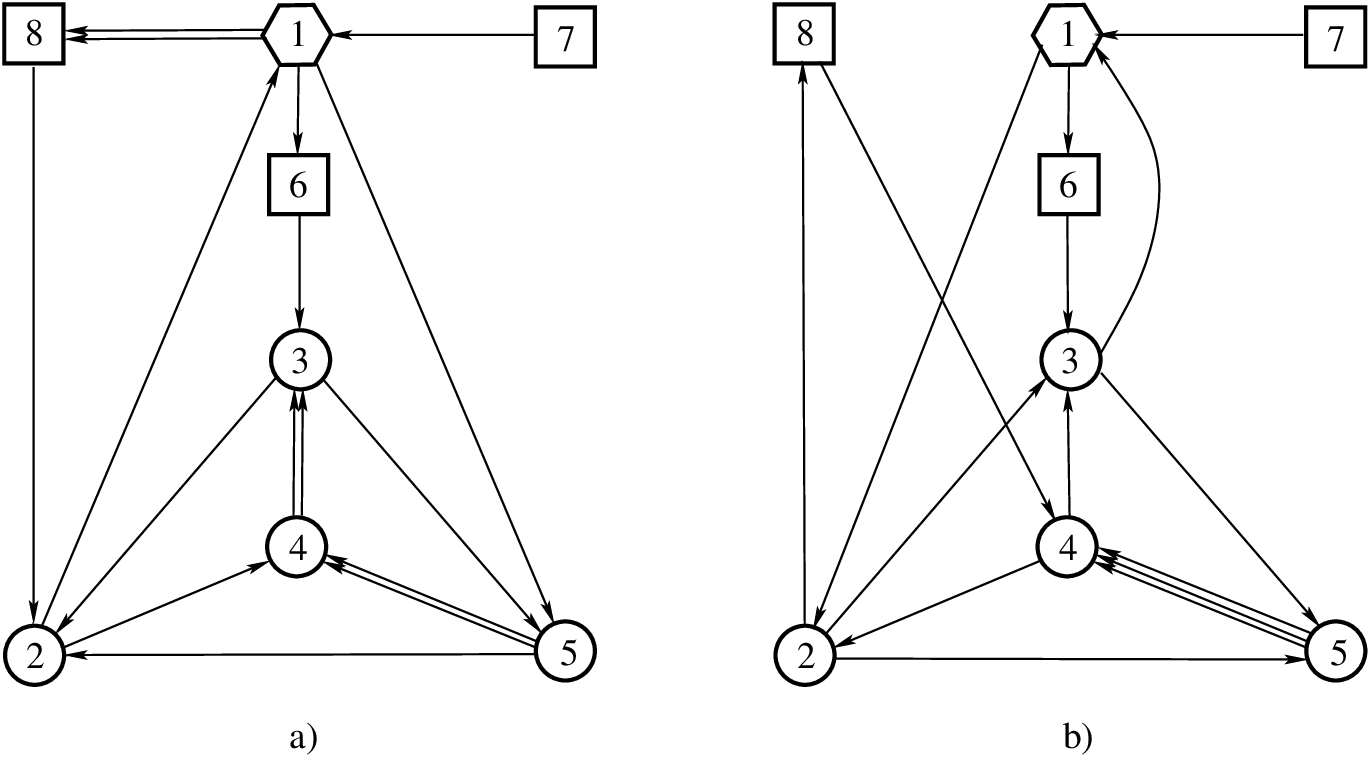}
\caption{Quiver $Q_3$: a) initial quiver; b) quiver after mutation at~2}
\label{cgacgpestq}
\end{center}
\end{figure}

Our goal is to lift $\GCC_3$ to the whole $\Mat_3$, hence $f_8=\det X$ becomes the only distinguished polynomial. For all other variables in the initial cluster $F_3$ one has $\hat f_i=f_i$ and $\hat f_i'=f_i'$, and the exchange relations are lifted to $\Mat_3$ in a trivial way. However, after the mutation at 
vertex~2 and replacing $f_2$ with $f_2'=x_{13}x_{32}x_{33}+x_{22}x_{23}x_{33}-x_{12}x_{33}^2-x_{23}^2x_{32}$
we get the quiver shown in Fig.~\ref{cgacgpestq}b). The new exchange relation for function 
$f_1$ reads $f_1f_1''=(f_3)^2+(f_9/f_8)f_3f_2'+(f_2')^2f_6f_7$, 
so that 
$\hat f_1''=f_1''f_8$, and hence the exchange relation for $\hat f_1$ reads 
\[
\hat f_1\hat f_1''=(\hat f_3)^2f_8+\hat f_9\hat f_3\hat f_2'+(\hat f_2')^2\hat f_6\hat f_7f_8.
\]
Note that the first and the last term in the right hand side of this relation are not coprime, as required in the definition of a generalized exchange relation. We are interested in recovering such relations for all clusters 
in $\GCC_3$. 
\end{example}

\section{Sufficient condition for the existence of a coherent pullback}\label{maincond}

In this section we obtain the following sufficient condition for the existence of a coherent pullback. Recall that 
a family of functions $\{\varphi_m\}_{m=1}^p$ is said to be algebraically dependent on a subvariety $W\subset \AA^r$ if there exists a polynomial $R(\varphi_1,\dots,\varphi_p)$ that vanishes identically on $W$, and algebraically independent if such a polynomial does not exist.

\begin{theorem}\label{maincondtheo}
A coherent pullback of a regular (generalized) cluster structure $\CC$ on $V$ with the initial cluster 
$F=\{f_l\}_{l=1}^s$ exists if for any $k\in K$ 
the family $(\hat\ttF\cup\ttG\cup\ttH)\setminus\{\ttq_{k}\}$
is algebraically independent on $\{\ttq_{k}=0\}\subset\AA^r$. 
\end{theorem}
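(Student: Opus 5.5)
The plan is to reinterpret the exponents $\lambda_k$ and $\Lambda_k$ as divisorial valuations and to propagate the hypothesis from seed to seed by induction along mutation sequences. For $k\in K$ let $\nu_k$ denote the order of vanishing along the irreducible divisor $\{\ttq_k=0\}\subset\AA^r$. This is a discrete valuation on $\C(x_1,\dots,x_r)$, and it is additive on products. By~\eqref{regpull}, $\nu_k(\Psi^*f)=-\lambda_k(f)$ for every cluster variable $f$, since $\hat\ttf$ is a polynomial in $x$ not divisible by $\ttq_k$.

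The exponent $\Lambda_k(P)$ in~\eqref{liftedexch} is different: it is defined \emph{formally}. The polynomial $\tilde{\ttP}$ is not divisible by $\ttq_k$ as a polynomial in the symbols $\hat\ttf^\Sigma_l,\ttg_i,\tth_j$, but it might still vanish on $\{\ttq_k=0\}$ as a function of $x$. This is exactly what happens in Example~\ref{badex2}, where $\ttf_1'+\ttf_3$ is divisible by $\tth$ in $\C[x]$. Once we know $\nu_k(\Psi^*P)=-\Lambda_k(P)$, applying $\nu_k$ to $\Psi^*(f_m^\Sigma f_m^{\Sigma'})=\Psi^*P_m^{\Sigma,\Sigma'}$ gives $\lambda_k(f_m^\Sigma)+\lambda_k(f_m^{\Sigma'})=\Lambda_k(P_m^{\Sigma,\Sigma'})$ for every $k$, that is, $M=1$.

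\textbf{Key lemma.} If for a seed $\Sigma$ the family $(\hat\ttF^\Sigma\cup\ttG\cup\ttH)\setminus\{\ttq_k\}$ is algebraically independent on $\{\ttq_k=0\}$, then formal and actual valuations agree. More precisely, for any polynomial $A$ in these symbols not divisible by the symbol $\ttq_k$, the function $A(\hat\ttf^\Sigma,\ttg,\tth)$ does not vanish identically on $\{\ttq_k=0\}$. Indeed, setting the symbol $\ttq_k$ to $0$ in $A$ gives a nonzero polynomial in the remaining symbols. Its restriction to the divisor is that polynomial evaluated on the restricted functions, which is nonzero by independence. Hence $\nu_k(A)=0$. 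Applied to $\tilde{\ttP}_m^{\Sigma,\Sigma'}$, this gives $\nu_k(\Psi^*P)=-\Lambda_k(P)$, so $M=1$ for every exchange relation out of $\Sigma$. Algebraic independence of $\hat\ttF^\Sigma\cup\ttG\cup\ttH$ on all of $\AA^r$ follows at once, since any relation on $\AA^r$ restricts to the divisor.

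\textbf{Induction.} The hypothesis of the theorem is exactly the lemma's hypothesis for the initial seed. It remains to show that the hypothesis passes from $\Sigma$ to an adjacent seed $\Sigma'$ obtained by mutation at $m$. Since $M=1$, the lifted relation reads $\hat\ttf_m^\Sigma\hat\ttf_m^{\Sigma'}=\tilde{\ttP}$ in $\C[x]$. Restrict to $\{\ttq_k=0\}$. Both $\hat\ttf_m^\Sigma$ and $\tilde{\ttP}$ have nonzero restrictions: the first by independence, the second by the lemma. Hence $\hat\ttf_m^{\Sigma'}$ restricts to a nonzero function equal to $\tilde{\ttP}|/\hat\ttf_m^\Sigma|$. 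Moreover, $\tilde{\ttP}$ involves only $\hat\ttf_l^\Sigma$ with $l\ne m$ and the distinguished polynomials. On the divisor the symbol $\ttq_k$ specializes to $0$, so the restricted $\tilde{\ttP}$ lies in the field generated by the other restricted functions.

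Consequently the restricted families for $\Sigma$ and for $\Sigma'$ generate the same subfield of the function field of the divisor. The first family has $|\hat\ttF\cup\ttG\cup\ttH|-1$ elements and is algebraically independent, so this subfield has that transcendence degree. The second family has the same cardinality and generates a field of the same transcendence degree, hence it is also algebraically independent. Thus the hypothesis holds at $\Sigma'$, and by connectivity of the exchange graph it holds at every seed. Together with the lemma, this yields coherence.

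The main obstacle I expect is the key lemma, specifically making precise the identification of the formal exponent $\Lambda_k$ with the valuation $\nu_k$. One has to be careful that a polynomial which is nonzero as an abstract polynomial remains nonzero after setting the symbol $\ttq_k$ to $0$ whenever it is not divisible by that symbol. One also has to handle the generalized case, where the coefficients of the string may be Laurent in the $\ttg_i$. Both issues are covered by the lemma as stated.
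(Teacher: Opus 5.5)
Your proposal is correct and is essentially the paper's proof. Your key lemma is Lemma~\ref{onej}: a formally non-divisible $\tilde\ttP$ that vanished on $\{\ttq_k=0\}$ would, after setting the symbol $\ttq_k$ to zero, certify a dependence. Your induction step is the propagation lemma that follows it, and your remark on relations over $\AA^r$ is the paper's closing argument; there one must first divide out the maximal power of the symbol $\ttq_k$, which the paper does by taking $R$ irreducible.

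The differences are only presentational. You phrase the exponents via the valuations $\nu_k$, which tacitly uses irreducibility of each $\ttq_k$, as the paper also does. Your transcendence-degree argument, showing that the two restricted families generate the same field, makes rigorous the propagation step that the paper merely asserts.
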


\begin{proof} For a pair of adjacent seeds $\Sigma$, $\Sigma'$ and 
an exchange relation $f_m^\Sigma f_m^{\Sigma'}=P_m^{\Sigma,\Sigma'}$ in $\CC$, write down the evaluation of 
$\tilde\ttP_m^{\Sigma,\Sigma'}$ in the ring $\C[x_1,\dots,x_r]$ similarly to~\eqref{regpull} as a polynomial not divisible by any of $\ttq_k$ 
divided by $\prod_{k\in K}\ttq_k^{\lambda_k(\tilde \ttP_m^{\Sigma,\Sigma'})}$.
Clearly, $\lambda_k(\tilde \ttP_m^{\Sigma,\Sigma'})\le0$, 
since the evaluation is a polynomial. It follows immediately from~\eqref{liftedexch} that
\begin{equation}\label{equaldenom}
\lambda_k(\tilde \ttP_m^{\Sigma,\Sigma'})=\lambda_k(f_m^\Sigma)+\lambda_k(f_m^{\Sigma'})-\Lambda_k(P_m^{\Sigma,\Sigma'})
\end{equation}
so that condition $M=1$ for $M$ given by~\eqref{M} is equivalent to $\lambda_k(\tilde \ttP_m^{\Sigma,\Sigma'})=0$
for all $k\in K$. 

\begin{lemma}\label{onej}
Assume that for some $k\in K$ 
and some seed $\Sigma$, the family 
$(\hat\ttF^\Sigma\cup\ttG\cup\ttH)\setminus\{\ttq_k\})$
is algebraically independent on $\{\ttq_{k}=0\}\subset\AA^r$ and $\Sigma'$ is adjacent to $\Sigma$
in direction $m\in[1,s]$. Then $\lambda_{k}(\tilde \ttP_m^{\Sigma,\Sigma'})=0$.
\end{lemma}

\begin{proof} 
Indeed, condition $\lambda_{k}(\tilde \ttP_m^{\Sigma,\Sigma'})>0$ would mean that $\ttP_m^{\Sigma,\Sigma'}$ vanishes identically on $\{\ttq_{k}=0\}$. Note that $\ttP_m^{\Sigma,\Sigma'}$ is not divisible by $\ttq_{k}$ in the ring 
$\C\left[\{\hat\ttf_l^t\}_{l\ne m}, \{\tth_j\}_{j\in J} \right]$. Therefore, the polynomial
$\ttP_m^{\Sigma,\Sigma'}\bigr\rvert_{\ttq_{k}=0}$
certifies an algebraic dependence of the family $(\hat\ttF^\Sigma\cup\ttH)\setminus\{\ttq_k\})$
on $\{\ttq_{k}=0\}$, a contradiction.
\end{proof}
 
\begin{lemma} 
Assume that for some seed $\Sigma$ the families 
$(\hat\ttF^\Sigma\cup\ttG\cup\ttH)\setminus\{\ttq_{k}\})$
are algebraically independent on $\{\ttq_{k}=0\}\subset\AA^r$ for all $k\in K$.
Then the same holds for the families $(\hat\ttF^{\Sigma'}\cup\ttG\cup\ttH)\setminus\{\ttq_{k}\})$
for any $\Sigma'$ mutationally equivalent to $\Sigma$.
\end{lemma}

\begin{proof} It suffices to prove the assertion of the lemma for a seed $\Sigma'$ adjacent to $\Sigma$. 
Assume that $\Sigma'$ is adjacent to $\Sigma$ in  direction $m$.
It follows from Lemma~\ref{onej} 
that under the assumptions of the lemma,
 $\lambda_k(\tilde \ttP_m^{\Sigma,\Sigma'})=0$ 
for all $k\in K$. 
Consequently, conditions~\eqref{equaldenom} guarantee that~\eqref{liftedexch} is satisfied with $M=1$, so that 
$\hat\ttf_m^{\Sigma'}=\tilde\ttP_m^{\Sigma,\Sigma'}/\hat\ttf_m^\Sigma$. Therefore, an algebraic dependence for a family
$(\hat\ttF^{\Sigma'}\cup\ttH)\setminus\{\ttq_{k}\}$
on $\{\ttq_{k}=0\}$ would imply an algebraic dependence of the family 
$(\hat\ttF^\Sigma\cup\ttH)\setminus\{\ttq_{k}\}$
on the same subvariety, a contradiction.
\end{proof}

It remains to prove that the conditions of the theorem imply that the family $\widehat\ttF^\Sigma\cup\ttG\cup\ttH$ is algebraically independent on $\AA^r$. Indeed, assume the contrary, and let $R$ be an irreducible polynomial that certifies the algebraic dependence. Pick an arbitrary $\ttq_k$ and write $R=\ttq_kR_1+R_0$  where $R_0$ is not divisible by $\ttq_k$. Since $R$ is irreducible, $R_0$ is a non-trivial polynomial that vanishes identically on $\{\ttq_k=0\}$, a contradiction. 
\end{proof}

The following example shows that the assumptions of Theorem~\ref{maincondtheo} are not necessary for the existence of the regular pullback.

\begin{example}\label{regdepend}
Assume that $s=3$ and $I=\varnothing$, so that $V=\AA^3$ with coordinates $y_1, y_2, y_3$. 
Define a cluster structure $\CC$ of rank~1 on $V$ 
by the initial cluster $\{f_1=y_1, f_2=y_1^2+y_1y_2-y_3, f_3=y_3\}$ with frozen variables $f_2, f_3$ and  the 
exchange relation $f_1f_1'=f_2+f_3$. Clearly, $f_1'=y_1+y_2$, so that $\CC$ is a regular cluster structure on $V$.

Further, consider $\AA^4$ with coordinates $x_1,\dots,x_4$ and a map $\Psi\:\AA^4\to\AA^3$ given by
$\Psi^*y_1=x_1$ and $\Psi^*y_i=x_ix_4$ for $i=2,3$. Assume that $|J|=1$, and the only distinguished polynomial is 
$\tth=x_4$, so that $\Psi^*f_i=\ttf_i$ for $i=1,2$, $\Psi^*f_3=\ttf_3\tth$, and $\Psi^*f_1'=\ttf_1'$. 
The right hand side of the exchange relation in the initial cluster is $P=f_2+f_3$, so that 
$\tilde{\ttP}=\ttf_2+\tth\ttf_3$. Consequently, the evaluation of $\tilde\ttP$ in $\C[x_1,x_2,x_3,x_4]$ 
equals $x_1^2+x_1x_2x_4$, and hence the coherent pullback of $\CC$ to $\AA^4$ exists. On the other hand, 
the family $\{\ttf_1, \ttf_2, \ttf_3\}$ is algebraically dependent on the
subvariety $\{\tth=x_4=0\}$ since $\ttf_2\bigr\rvert_{\tth=0}=\ttf_1^2$.  
\end{example} 

\section{Combinatorics of a coherent pullback}\label{combin}
\subsection{The pullback of a seed}\label{seedpullback}
We assume that the generalized cluster structure $\CC$ on $V$ is described with the help of quiver with multiplicities
$Q=(Q,d_1,\dots,d_n)$, where $n$ is the number of mutable vertices, see~\cite{GSVdouble, GSVnewdouble} for details; without loss of generality we assume that vertices $1,\dots,n$ are mutable and vertices $n+1,\dots,s$ are frozen. Recall that the standard definition of the quiver mutation in direction $m$ for the quiver with multiplicities is modified as follows: if both vertices $i$ and $j$
in a path $i\to m\to j$ are mutable, then this path contributes $d_m$ edges $i\to j$ to the mutated quiver $Q'$; if exactly one of the vertices $i$ or $j$ is frozen then the path contributes $d_j$ or $d_i$ edges $i\to j$ to $Q'$, respectively. The multiplicities at the vertices do not change.  The adjacent cluster variable in direction 
$m$ is given by the generalized exchange relation $f_mf'_m=P_m=\sum_{r=0}^{d_m}T_r$ with
\begin{equation}\label{exchange}
T_r=p_{mr}\prod_{1\le l\le n} f_l^{rb_{ml}}\prod_{n<l\le s}f_l^{\lfloor rb_{ml}/d_m\rfloor}
\prod_{1\le l\le n}f_l^{(d_m-r)b_{lm}}\prod_{n<l\le s}f_l^{\lfloor (d_m-r)b_{lm}/d_m\rfloor};
\end{equation}
here $b_{ml}$ is the number of arrows from $m$ to $l$, $b_{lm}$ is the number of arrows from $l$ to $m$, and
$p_{mr}$ are exchange coefficients that satisfy condition $p_{m0}=p_{md_m}=1$ and are mutated via 
$p'_{mr}=p_{m,d_m-r}$.  The {\it $y$-variable\/} $t_m$ at vertex $m$ is 
\[
t_m=\prod_{1\le l\le s} f_l^{b_{ml}-b_{lm}},
\]
see~\cite[Section 6.1]{GSVdouble}.

We define the quiver $\Psi^*\widehat Q$ in a two-step procedure. First, take a copy of the quiver $Q$ and add a new frozen vertex for every $j\in J$ (note that vertices corresponding to $i\in I$ are already present in $Q$). The variables assigned to the vertices $k\in K$ are $\ttq_k$, and the variables assigned to other vertices are $\hat\ttf_l$, so that the set of variables is 
$\widehat\ttF\cup\ttG\cup\ttH$. All arrows between the mutable vertices of $Q$ are retained in 
$\Psi^*\widehat Q$. Additionally, for any $k\in K$, the {\it discrepancy\/}
\begin{equation}\label{newarrows}
\delta_{km}=\sum_{l=1}^n d_mb_{lm}\lambda_k(f_l)+\sum_{\substack {l=n+1\\ l\ne k}}^s b_{lm}\lambda_k(f_l)
-\sum_{l=1}^n d_mb_{ml}\lambda_k(f_l)  - \sum_{\substack {l=n+1\\ l\ne k}}^s b_{ml}\lambda_k(f_l)
\end{equation}
is computed and $|\delta_{km}|$ 
arrows are added between $m$ and $k$. These arrows are directed from $m$ to $k$ if $\delta_{km}$ in~\eqref{newarrows} is positive, and from $k$ to $m$ otherwise. As usual, for $k\in I$, the cycles of length~2 formed by the new arrows and existing arrows between $m$ and $k$ should be deleted.  Finally, the pullback of a string 
is defined as $\Psi^*(p_{mr})_{r=1}^{d_m} = (\hat \ttp_{mr})_{r=1}^{d_m}$, so that $\Psi^*\widehat\Sigma=(\Psi^*Q,
\widehat \ttF\cup\ttG\cup\ttH, \Psi^*\widehat\P)$ is a pullback of $\Sigma=(Q,F\cup G,\P)$. 
The following statement is immediate.

\begin{proposition}\label{ypullback}
The $y$-variables for the seed $\Psi^*\widehat\Sigma$ coincide with the pullback of $y$-variables for the seed $\Sigma$.
\end{proposition}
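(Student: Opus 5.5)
The plan is a direct computation: pull back $t_m$ factor by factor using~\eqref{regpull}, and check that the exponents of the distinguished polynomials $\ttq_k$ that appear are exactly those encoded by the new arrows between $m$ and $k$ in $\Psi^*\widehat Q$.

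\emph{Step 1: the pullback of $t_m$.} Substitute~\eqref{regpull}, $\Psi^*f_l=\hat\ttf_l\prod_{k\in K}\ttq_k^{-\lambda_k(f_l)}$, into $t_m$. This gives
\[
\Psi^*t_m=\prod_{l}\hat\ttf_l^{\,e_{ml}}\cdot\prod_{k\in K}\ttq_k^{-\sum_l e_{ml}\lambda_k(f_l)},
\]
where $e_{ml}$ is the exponent of $f_l$ in $t_m$. For a mutable vertex $l$ I take $e_{ml}=d_m(b_{ml}-b_{lm})$, and for a frozen vertex $l$ I take $e_{ml}=b_{ml}-b_{lm}$. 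The weight $d_m$ on mutable vertices is the convention for quivers with multiplicities in~\cite{GSVdouble}, and it is also the convention under which~\eqref{newarrows} is written. For the frozen vertices $l=k\in I$ I use that $\Psi^*g_k=\ttg_k\prod_{j\in J}\tth_j^{-\kappa_j(g_k)}$, so $\hat\ttf_l=\ttg_k$ and $g_k$ contributes no factor $\ttq_{k'}$ with $k'\in I$. For these $l$ I keep the factor $\ttg_k^{\,b_{mk}-b_{km}}$ separate from the sum.

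\emph{Step 2: comparison with $\Psi^*\widehat Q$.} Arrows between mutable vertices, and between $m$ and the frozen vertices of $Q$ that are not in $K$, are unchanged in $\Psi^*\widehat Q$. So the factors $\hat\ttf_l^{\,e_{ml}}$ are exactly the non-distinguished part of the $y$-variable of $\Psi^*\widehat\Sigma$. For $k\in K$, the total exponent of $\ttq_k$ in $\Psi^*t_m$ is
\[
(b_{mk}-b_{km})+\sum_{l\ne k}(b_{lm}-b_{ml})\,w_l\,\lambda_k(f_l),
\]
where $w_l=d_m$ for mutable $l$ and $w_l=1$ for frozen $l$. The first term is $0$ when $k\in J$. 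The sum is precisely $\delta_{km}$ from~\eqref{newarrows}. A positive $\delta_{km}$ means arrows $m\to k$, which contribute $\ttq_k^{+\delta_{km}}$ to the $y$-variable, so the signs agree. For $k\in I$, the arrows already present between $m$ and $k$ contribute $b_{mk}-b_{km}$. Deleting $2$-cycles cancels an arrow $m\to k$ against an arrow $k\to m$, so it does not change the net exponent. Hence the exponents of $\ttq_k$ match for every $k$.

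The main obstacle I expect is bookkeeping of conventions. I have to make sure the normalization of $t_m$ for generalized structures (the factor $d_m$ on mutable neighbours and no factor on frozen ones) is the one that produces~\eqref{newarrows}. I also have to treat correctly the vertices $k\in I$, which appear both as original frozen vertices and as distinguished ones, hence the exclusion $l\ne k$. Everything else is substitution.
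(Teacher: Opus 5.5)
Your proof is correct and is exactly the direct substitution the paper means when it calls the statement immediate. The discrepancy $\delta_{km}$ in \eqref{newarrows} is by design the $\ttq_k$-exponent of $\Psi^*t_m$ coming from the vertices other than $k$, and deleting $2$-cycles leaves the net exponents unchanged. You are also right to use the normalization $t_m=T_{d_m}/T_0$, with weight $d_m$ on mutable neighbours, rather than the displayed formula $t_m=\prod_l f_l^{b_{ml}-b_{lm}}$ taken literally. With the literal formula, the $\ttq_k$-exponents would not match \eqref{newarrows} whenever $d_m>1$ and some mutable neighbour $l$ has $\lambda_k(f_l)\ne0$.
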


The quiver $\Psi^*\widehat Q$ defined above is enough for an adequate description of the regular pullback of a cluster structure, see Section~\ref{liftquiver}. For the case of generalized cluster structures we will further modify the definition of $\Psi^*\widehat Q$ in
Section~\ref{almostquiver} below.

\subsection{Mutations of a coherent pullback}
In what follows we assume that the coherency condition $M=1$ for $M$ defined in~\eqref{M} is satisfied for any seed in 
$\CC$. Consequently, 
\begin{equation}\label{cohcond}
\Lambda_k(P_m)=\max_{0\le r\le d_m}\lambda_k(T_r), \quad k\in K, 
\end{equation}
and the exchange relation~\eqref{liftedexch} for $\hat\ttf_m$ and $\hat\ttf_m'$ reads
\begin{equation}\label{pullex} 
\hat\ttf_m\hat\ttf_m'=\sum_{r=0}^{d_m} \hat\ttT_r\prod_{k\in K}\ttq_k^{\mu_{km}(r)}
\end{equation}
with $\mu_{km}(r)=\Lambda_k(P_m)-\lambda_k(T_r)$;
here and in what follows we suppress explicit mention of adjacent seeds $\Sigma$ and $\Sigma'$. Further,
by~\eqref{exchange}, 
\begin{align*}
\lambda_k(T_r)&=\lambda_k(p_{mr})+r\sum_{l=1}^n b_{ml}\lambda_k(f_l)+\sum_{l=n+1}^s 
\left\lfloor \frac{rb_{ml}}{d_m}\right\rfloor\lambda_k(f_l)\\
& +(d_m-r)\sum_{l=1}^n b_{lm}\lambda_k(f_l)+\sum_{l=n+1}^s 
\left\lfloor \frac{(d_m-r)b_{lm}}{d_m}\right\rfloor\lambda_k(f_l),\quad k\in K, r\in[0,d_m].
\end{align*}
Denoting $\{a\}=a-\lfloor a\rfloor$ for any number $a$, one can rewrite the latter expression as
$\lambda_k(T_r)=\chi_{km}(r)+\tau_{km}r+\sigma_{km}$ with
\begin{equation}\label{lamcoef}
\begin{aligned}
\chi_{km}(r)=&\lambda_k(p_{mr})-
\sum_{l=n+1}^s\left(\left\{\frac{rb_{ml}}{d_m}\right\}+\left\{\frac{(d_m-r)b_{lm}}{d_m}\right\}\right)\lambda_k(f_l),\\
\tau_{km}=&\sum_{l=1}^n(b_{ml}-b_{lm})\lambda_k(f_l)+\sum_{l=n+1}^s\frac{b_{ml}-b_{lm}}{d_m}\lambda_k(f_l),\\
\sigma_{km}=&d_m\sum_{l=1}^n b_{lm}\lambda_k(f_l)+\sum_{l=n+1}^s b_{lm}\lambda_k(f_l).
\end{aligned} 
\end{equation}
Taking into account that $\chi_{km}(0)=\chi_{km}(d_m)=0$, we arrive at
\begin{equation*}
\tau_{km}=\frac{\lambda_k(T_{d_m})-\lambda_k(T_0)}{d_m}, \quad \sigma_{km}=\lambda_k(T_0),
\end{equation*}
which finally yields
\begin{equation}\label{mutr}
\mu_{km}(r)=-\chi_{km}(r)+r\frac{\mu_{km}(d_m)-\mu_{km}(0)}{d_m}+\mu_{km}(0).
\end{equation}

Note that after the mutation at $m$, one has $b'_{ml}=b_{lm}$ and $b'_{lm}=b_{ml}$, so that
\[
\chi'_{km}(r)=\chi_{km}(d_m-r),\qquad \tau'_{km}=-\tau_{km},\qquad \sigma'_{km}=d_m\tau_{km}+\sigma_{km}.
\]
Let $\chi_{km}^\Sigma(r)$ denote the analog of $\chi_{km}(r)$ written for vertex $m$ in an arbitrary seed $\Sigma$.

\begin{proposition}\label{samefy}
For any $\Sigma$, either $\chi_{km}^\Sigma(r)=\chi_{km}(r)$, or $\chi_{km}^\Sigma(r)=\chi'_{km}(r)$, depending on the parity of the number of mutations at $m$ in a sequence connecting $\Sigma$ with the initial seed.
\end{proposition}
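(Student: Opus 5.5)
The function $\chi_{km}^\Sigma(r)$ depends on only three kinds of data: the exponents $\lambda_k(p^\Sigma_{mr})$ of the exchange coefficients at $m$, the exponents $\lambda_k(f_l)$ for frozen $l$, and the fractional parts $\{rb^\Sigma_{ml}/d_m\}$ and $\{(d_m-r)b^\Sigma_{lm}/d_m\}$ for frozen $l$. We do not need to track any mutable entries of the exchange matrix. The plan is to show that each of these ingredients is either invariant under every mutation or is reflected $r\mapsto d_m-r$ exactly by mutations at $m$. Since every mutation is an involution on these data, induction on the length of a mutation sequence then gives the claim, with the parity of the number of mutations at $m$ deciding which of $\chi_{km}$ or $\chi'_{km}$ appears.

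\emph{Frozen exponents and exchange coefficients.} Frozen variables never mutate, so $\lambda_k(f_l)$ for $n<l\le s$ is the same in every seed. The exchange coefficients $p_{mr}$ attached to $m$ are untouched by mutations at $m'\ne m$. A mutation at $m$ replaces them by $p'_{mr}=p_{m,d_m-r}$. Hence $\lambda_k(p^\Sigma_{mr})$ equals either $\lambda_k(p_{mr})$ or $\lambda_k(p_{m,d_m-r})$, according to parity.

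\emph{Fractional parts.} For frozen $l$, $\{rb_{ml}/d_m\}$ depends only on $b_{ml}\bmod d_m$, and similarly for $b_{lm}$. A mutation at $m$ swaps $b_{ml}$ and $b_{lm}$. Under $r\mapsto d_m-r$ the pair of terms $\{rb_{ml}/d_m\}+\{(d_m-r)b_{lm}/d_m\}$ is carried exactly to the same pair for the swapped quiver; this is the identity $\chi'_{km}(r)=\chi_{km}(d_m-r)$ already noted before the proposition.

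The main obstacle is a mutation at a mutable vertex $m'\ne m$, which can change the number of arrows between $m$ and a frozen vertex $l$. By the modified mutation rule for quivers with multiplicities, a path $l\to m'\to m$ contributes $d_m$ edges $l\to m$, since here $l$ is frozen and $m$ is mutable. Likewise a path $m\to m'\to l$ contributes $d_m$ edges $m\to l$. After cancellation of 2-cycles, the net change in $b_{ml}-b_{lm}$ is therefore a multiple of $d_m$.

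I need more than this: each of $b_{ml}$ and $b_{lm}$ separately must be preserved modulo $d_m$. That is automatic if, in the quiver with multiplicities, the arrows between $m$ and $l$ all point in one direction, as I expect is standard in~\cite{GSVdouble}. Then $b_{ml}$ and $b_{lm}$ change only by multiples of $d_m$, so their residues mod $d_m$, and hence all the fractional parts in $\chi_{km}$, are invariant under mutations at $m'\ne m$.

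If a direction reversal can occur, I would handle it with the relation $\{a\}+\{-a\}\in\{0,1\}$. I would then check that $\{rb/d_m\}+\{(d_m-r)b'/d_m\}$ depends only on the class of $b-b'$ modulo $d_m$ when one of $b,b'$ is zero, which covers the case where all arrows between $m$ and $l$ run one way after cancellation. Combining the three observations, mutations at $m'\ne m$ leave $\chi^\Sigma_{km}$ unchanged and mutations at $m$ replace $\chi^\Sigma_{km}(r)$ by $\chi^\Sigma_{km}(d_m-r)$. Induction along the mutation sequence then yields $\chi_{km}$ or $\chi'_{km}$ according to parity.
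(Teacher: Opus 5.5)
Your argument is correct and is essentially the paper's proof. The paper treats the $\lambda_k(p_{mr})$ term and the frozen exponents exactly as you do. For a mutation at $i\ne m$ it likewise uses two facts: the change in arrows between $m$ and a frozen $l$ is a multiple of $d_m$, and one of $b_{ml},b_{lm}$ always vanishes. It then checks the fractional parts in two cases, direction kept or direction reversed with $b'_{lm}=d_m-b_{ml}$.

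Such reversals do occur, so your first route fails in general: $b_{ml}$ and $b_{lm}$ are not separately preserved modulo $d_m$. What actually carries the argument is your fallback identity $\{rb/d_m\}+\{(d_m-r)b'/d_m\}=\{r(b-b')/d_m\}$ for $bb'=0$, which is a compact form of the paper's two-case computation.
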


\begin{proof} The statement is clear for the first term in the expression for $\chi_{km}(r)$. Let us prove that the second term does not change after the mutation at $i\ne m$. First, all vertices $l\in[n+1,s]$ are frozen, so 
$\lambda_k(f_l)$ does not change during mutations. Note that at least one of $b_{ml}$ and $b_{lm}$ vanishes. Without loss of generality we assume that $b_{lm}=0$ and $b_{ml}\ge0$. If there is no path $l\to i\to m$ or $m\to i\to l$ in the quiver, mutation at $i$ does not influence $b_{ml}$ and $b_{lm}$. Otherwise, either $b_{ml}'=b_{ml}\pm d_m\ge0$ and $b'_{lm}=0$, or 
$b'_{ml}=0$ and $b'_{lm}=d_m-b_{ml}>0$. In the first case,
\[
\left\{\frac{rb'_{ml}}{d_m}\right\}=\left\{\frac{rb_{ml}}{d_m}\pm r\right\}=\left\{\frac{rb_{ml}}{d_m}\right\},\quad
\left\{\frac{(d_m-r)b'_{lm}}{d_m}\right\}=\left\{\frac{(d_m-r)b_{lm}}{d_m}\right\}=0,
\]
while in the second case,
\[
\left\{\frac{rb'_{ml}}{d_m}\right\}=0,\quad \left\{\frac{(d_m-r)b'_{lm}}{d_m}\right\}=
\left\{\frac{(d_m-r)(d_m-b_{ml})}{d_m}\right\}=\left\{\frac{rb_{ml}}{d_m}\right\},
\]
hence in both cases the coefficient at $\lambda_k(f_l)$ in~\eqref{lamcoef} is preserved.
\end{proof}

It follows from~\eqref{mutr} and Proposition~\ref{samefy} that behavior of $\mu_{km}(r)$ is defined by the behavior
of $\mu_{km}(0)$ and $\mu_{km}(d_m)$. Denote by $\mu_{km}^i(0)$ and $\mu_{km}^i(d_m)$ the values of $\mu_{km}(0)$ and 
$\mu_{km}(d_m)$ after mutation at $i$. Clearly, if there are no arrows between $i$ and $m$ then 
$\mu_{km}^i(0)=\mu_{km}(0)$ and $\mu_{km}^i(d_m)=\mu_{km}(d_m)$. Otherwise, $\mu_{km}^i(0)$ and $\mu_{km}^i(d_m)$ are
obtained as follows.

\begin{proposition}\label{newmu}
{\rm (i)} If $b_{mi}>0$ then
\begin{equation}\label{newmufrom}
\begin{gathered}
\mu_{km}^i(0)=\max_{0\le r\le d_m}\left\{\chi_{km}(r)-\frac r{d_m}\left(\mu_{km}(d_m)-\mu_{km}(0)\right)
+rb_{mi}\mu_{ki}(0)\right\},\\
\mu_{km}^i(d_m)-\mu_{km}^i(0)=\mu_{km}(d_m)-\mu_{km}(0)-d_mb_{mi}\mu_{ki}(0).
\end{gathered}
\end{equation}
{\rm (ii)} If $b_{im}>0$ then
\begin{equation}\label{newmutom}
\begin{gathered}
\mu_{km}^i(d_m)=\max_{0\le r\le d_m}\left\{\chi_{km}(d_m-r)-\frac r{d_m}\left(\mu_{km}(0)-\mu_{km}(d_m)\right)
+rb_{im}\mu_{ki}(d_i)\right\},\\
\mu_{km}^i(0)-\mu_{km}^i(d_m)=\mu_{km}(0)-\mu_{km}(d_m)-d_mb_{im}\mu_{ki}(d_i).
\end{gathered}
\end{equation}
\end{proposition}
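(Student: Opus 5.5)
The plan is to derive both formulas from coherence~\eqref{cohcond} and the decomposition $\lambda_k(T_r)=\chi_{km}(r)+\tau_{km}r+\sigma_{km}$. By Proposition~\ref{samefy}, $\chi_{km}$ does not change under a mutation at $i\ne m$. Hence everything reduces to computing the new slope $\tau^i_{km}$. Once $\tau^i_{km}$ is known, coherence in the mutated seed gives
\[
\mu^i_{km}(0)=\Lambda^i_k(P_m)-\lambda^i_k(T_0)=\max_{0\le r\le d_m}\bigl\{\chi_{km}(r)+r\tau^i_{km}\bigr\}.
\]
Moreover $\mu^i_{km}(d_m)-\mu^i_{km}(0)=-d_m\tau^i_{km}$. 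So the second line of~\eqref{newmufrom} is exactly the statement
\[
\tau^i_{km}=\tau_{km}+b_{mi}\mu_{ki}(0).
\]
Substituting this together with $\tau_{km}=-(\mu_{km}(d_m)-\mu_{km}(0))/d_m$ into the maximum gives the first line of~\eqref{newmufrom}.

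So the core step is the computation of $\tau^i_{km}$, using the explicit formula~\eqref{lamcoef}. I will assume $b_{mi}>0$; then $b_{im}=0$. After the mutation at $i$ three things happen.
\begin{itemize}
\item The arrows between $m$ and $i$ reverse, so the contribution $(b_{mi}-b_{im})\lambda_k(f_i)$ becomes $-b_{mi}\lambda_k(f'_i)$.
\item The new variable has $\lambda_k(f'_i)=\Lambda_k(P_i)-\lambda_k(f_i)$. This holds because $M=1$ for the exchange relation at $i$, via~\eqref{equaldenom}.
\item The paths $m\to i\to l$ create new arrows $m\to l$. By the multiplicity rule their number is $b_{mi}d_ib_{il}$ for mutable $l$ and $b_{mi}b_{il}$ for frozen $l$ (after dividing by $d_m$ in~\eqref{lamcoef}). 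Paths $l\to i\to m$ do not occur, since $b_{im}=0$.
\end{itemize}
Cancellation of $2$-cycles between $m$ and $l$ leaves $b_{ml}-b_{lm}$ unchanged, so it can be ignored. The added terms sum to $b_{mi}$ times the $\lambda_k$-exponent of the extreme monomial of $P_i$ built from the out-arrows of $i$. Collecting terms, the $\lambda_k(f_i)$ contributions combine with $\Lambda_k(P_i)$ to give $b_{mi}$ times the difference between $\Lambda_k(P_i)$ and the valuation of that extreme monomial, that is, $b_{mi}\mu_{ki}(\cdot)$ evaluated at the appropriate end.

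The main obstacle is the bookkeeping in this collection step. It has to be checked carefully that the signs and the end ($0$ versus $d_i$) come out as $+b_{mi}\mu_{ki}(0)$, as stated. This requires matching which extreme term of~\eqref{exchange} at $i$ (in-arrows for $r=0$, out-arrows for $r=d_i$) collects the new arrows, with the orientation conventions of~\eqref{exchange}. It also requires checking that the frozen floors $\lfloor\cdot\rfloor$ only enter through $\chi$, which is invariant. I will also verify that the frozen vertices $l=k$ with $k\in I$ cause no extra term, since $\lambda_k(f_l)$ is constant for frozen $l$.

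Case (ii) will follow by the symmetric argument, or more simply by reversing all arrows. Reversal swaps $T_r\leftrightarrow T_{d_m-r}$, replaces $\chi_{km}(r)$ by $\chi_{km}(d_m-r)$, interchanges the roles of $\mu(0)$ and $\mu(d)$ at both $m$ and $i$, and turns $b_{im}$ into $b_{mi}$. Applying (i) to the reversed quiver then yields~\eqref{newmutom}.
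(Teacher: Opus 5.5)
Your plan follows the paper's proof. Subtracting \eqref{after} from \eqref{before} is exactly your computation of $-d_m(\tau^i_{km}-\tau_{km})$. The first lines of \eqref{newmufrom} and \eqref{newmutom} then follow, as you say, from $\chi^i_{km}=\chi_{km}$ and $\min_r\mu^i_{km}(r)=0$. The gap is the step you postpone: under the conventions you fix, it does not give the stated result.

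You take the labeling of \eqref{exchange}, where index $0$ is the in-arrow term and index $d$ is the out-arrow term. With that labeling your three bullets give
\[
\tau^i_{km}-\tau_{km}=b_{mi}\Bigl(d_i\sum_{l=1}^n b_{il}\lambda_k(f_l)+\sum_{l=n+1}^s b_{il}\lambda_k(f_l)-\lambda_k(f_i)-\lambda_k(f'_i)\Bigr)=b_{mi}\bigl(\lambda_k(T^{(i)}_{d_i})-\Lambda_k(P_i)\bigr)=-b_{mi}\mu_{ki}(d_i),
\]
where $T^{(i)}_{d_i}$ is the out-arrow term of $P_i$. Note that $\lambda_k(f_i)+\lambda_k(f'_i)=\Lambda_k(P_i)$ enters with a minus sign, so the increment is $-b_{mi}\mu_{ki}$, not $+b_{mi}\mu_{ki}$. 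The result is $\mu^i_{km}(d_m)-\mu^i_{km}(0)=\mu_{km}(d_m)-\mu_{km}(0)+d_mb_{mi}\mu_{ki}(d_i)$, which has the opposite sign and the opposite endpoint from the statement.

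More careful bookkeeping cannot fix this within that labeling. Take $d_m=d_i=1$, with arrows $m\to i$ and $k\to i$ in the lifted quiver and none between $m$ and $k$. Then $\mu_{ki}(0)>0$. But mutation at $i$ creates no arrows between $m$ and $k$, so $\mu_{km}(d_m)-\mu_{km}(0)$ cannot drop by $d_mb_{mi}\mu_{ki}(0)$.

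The paper reaches the stated formula because its \eqref{before} reads $\mu_{km}(0)+(\text{out-arrow valuation})=\mu_{km}(d_m)+(\text{in-arrow valuation})$. So in that proof the index $0$ is attached to the out-arrow extreme term at every vertex, the reverse of \eqref{exchange}. Under that labeling, $\chi_{km}(r)$ and $\tau_{km}$ of \eqref{lamcoef} become $\chi_{km}(d_m-r)$ and $-\tau_{km}$. The bracket then equals $-\mu_{ki}(0)$, and your target $\tau^i_{km}=\tau_{km}+b_{mi}\mu_{ki}(0)$ is exactly right.

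Equivalently, in the literal labeling of \eqref{exchange}:
\begin{itemize}
\item \eqref{newmufrom}, with $b_{im}$ in place of $b_{mi}$, is what holds when $b_{im}>0$;
\item \eqref{newmutom}, with $b_{mi}$ in place of $b_{im}$, is what holds when $b_{mi}>0$.
\end{itemize}
To complete your argument, fix the labeling of $T_0$ and $T_{d_m}$ consistently with the formula you are proving. As it stands, the proposal asserts a sign-and-endpoint match that its own conventions contradict.
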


\begin{proof}
(i) Recall that $\Lambda_k(P_m)=\mu_{km}(0)+\lambda_k(T_0)=\mu_{km}(d_m)+\lambda_k(T_{d_m})$. Equating the two expressions above we get
\begin{equation}\label{before}
\mu_{km}(0)
+d_mb_{mi}\lambda_k(f_i)+\lambda_k^{\mathrm {out}}
=\mu_{km}(d_m)
+\lambda_k^{\mathrm {in}}
\end{equation}
with
\begin{align*}
\lambda_k^{\mathrm {out}}&=d_m\sum_{\substack{l=1\\l\ne i}}^nb_{ml}\lambda_k(f_l)+\sum_{l=n+1}^sb_{ml}\lambda_k(f_l),\\
\lambda_k^{\mathrm {in}}&=d_m\sum_{l=1}^nb_{lm}\lambda_k(f_l)+\sum_{l=n+1}^sb_{lm}\lambda_k(f_l).
\end{align*}
In a similar way, after mutation at $i$ we have
\begin{multline}\label{after}
\mu_{km}^i(0)
+\lambda_k^{\mathrm {out}}+d_m\sum_{l=1}^nb_{mi}d_ib_{il}\lambda_k(f_l)
+\sum_{l=n+1}^sb_{mi}d_mb_{il}\lambda_k(f_l)\\
=\mu_{km}^i(d_m)
+\lambda_k^{\mathrm {in}}+d_mb_{mi}\lambda_k(f'_i).
\end{multline}
Subtracting~\eqref{after} from~\eqref{before}, 
we arrive at
\begin{multline*}
\mu_{km}^i(d_m)-\mu_{km}^i(0)=\mu_{km}(d_m)-\mu_{km}(0)\\
+d_mb_{mi}\left(d_i\sum_{l=1}^nb_{il}\lambda_k(f_l)
+\sum_{l=n+1}^sb_{il}\lambda_k(f_l)-\lambda_k(f_i)-\lambda_k(f'_i)\right).
\end{multline*}
To get the second relation in~\eqref{newmufrom}, it remains to notice that the expression in the brackets above equals
$-\mu_{ki}(0)$.

To get the first relation in~\eqref{newmufrom}, we write down~\eqref{mutr} for $\mu_{km}^i(r)$ 
taking into account that $\chi^i_{km}(r)=\chi_{km}(r)$ by Proposition~\ref{samefy},
and use condition
\[\min_{0\le r\le d_m}\mu_{km}^i(r)=0,\] 
which is equivalent to~\eqref{cohcond}. Consequently,
\[
\mu_{km}^i(0)=\max_{0\le r\le d_m}\left\{\chi_{km}(r)-\frac r{d_m}\left(\mu_{km}^i(d_m)-\mu_{km}^i(0)\right)\right\},
\]
and the first relation in~\eqref{newmufrom} follows from the second one.

(ii) The proof is similar, with~\eqref{mutr} replaced by
\[
\mu_{km}(d_m-r)=-\chi_{km}(d_m-r)+\frac r{d_m}(\mu_{km}(0)-\mu_{km}(d_m))+\mu_{km}(d_m).
\]
\end{proof}

\subsection{The simplest case: $d_m=1$}\label{liftquiver}
Mutation rules given in Proposition~\ref{newmu} are especially simple when $d_m=1$ for all $m\in[1,n]$, that is,
when $\CC$ is a regular cluster structure on $V$. In this case at least one of $\mu_{km}(0)$ and $\mu_{km}(1)$ vanishes,
and mutation of $\Psi^*\widehat Q$ is being carried out according to the standard rules of quiver mutation. For $r=s$ and 
$\Psi=\Id$ the cluster structures $\CC$ and $\widehat \CC$ are quasi-isomorphic, see~\cite{quasichris}.

\begin{example}\label{gl3st}
Let $V$ be a quasi-affine subvariety in $\Mat_3=\AA^9$ with coordinates $Y=(y_{ij})_{i,j=1}^3$ given by the condition
$g=\det Y_{12}^{23}\ne 0$. The initial cluster $F=\{f_l\}_{l=1}^9$ consists of four mutable variables $f_1=y_{33}$, 
$f_2=y_{32}$, $f_3=y_{23}/g$, $f_4=\det Y_{23}^{23}/g$, and five frozen variable $f_5=y_{13}$,  $f_6=\det Y$,
$f_7=\det Y_{23}^{12}$, $f_8=y_{31}$,  $f_9=g$. The initial quiver $Q$ is shown in Fig.~\ref{gl3qu}a).
 It is easy to check, using~\cite[Theorem~2.10]{BFZ}, that the obtained cluster structure $\CC$ on $V$ is regular.

\begin{figure}[ht]
\begin{center}
\includegraphics[width=12cm]{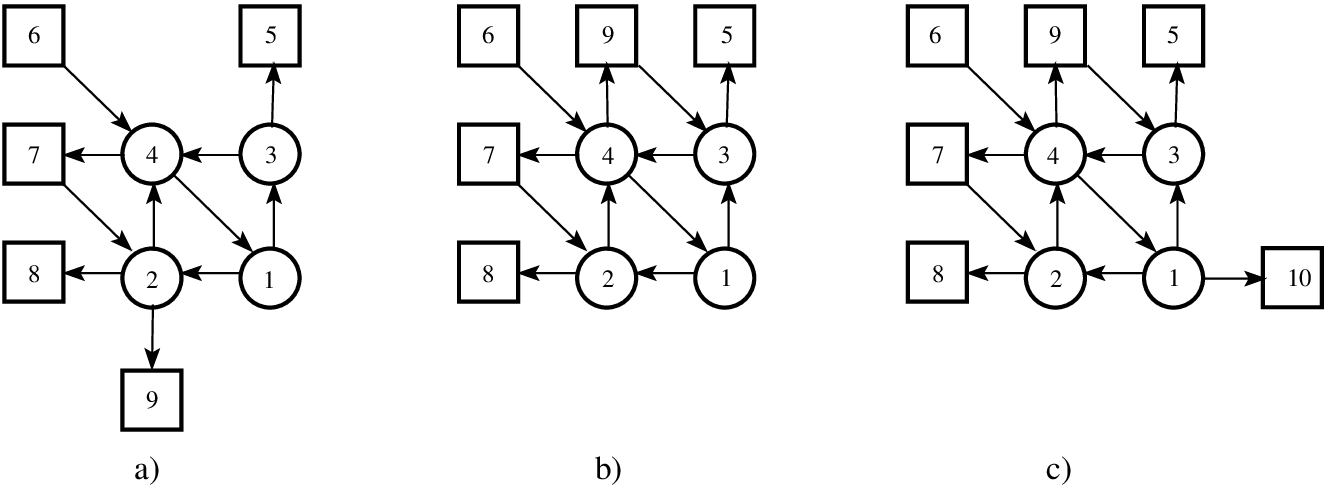}
\caption{Quiver and its pullbacks: a) initial quiver $Q$; b) quiver $\widehat Q$; c) quiver $\Psi^*\widehat Q$}
\label{gl3qu}
\end{center}
\end{figure}
 
Assume that $g$ is the distinguished polynomial.
It is easy to check that the condition of Theorem~\ref{maincondtheo} is satisfied, so there exists a regular pullback of 
$\CC$ to $\Mat_3$ with $\hat f_l=f_l$ for $l\ne 3,4$, $\hat f_3=y_{23}$, $\hat f_4=Y_{23}^{23}$.
Further, $\lambda_g(f_3)=\lambda_g(f_4)=1$,  and $\lambda_g(f_l)=0$ for $l\ne 3,4,9$, so that the discrepancy computed 
via~\eqref{newarrows} vanishes for
$m=1$, equals~$-1$ for $m=2,3$, and equals~1 for $m=4$. Consequently,  arrows from~9 
to~2, from~9 to~3, and from~4 to~9 are added; the first of them cancels the existing arrow from~2 to~9. The corresponding quiver $\widehat Q$ is shown in Fig.~\ref{gl3qu}b). This gives an example of a quasi-isomorphism between two cluster structures.

Further, consider $\AA^{10}$ with coordinates $X=(x_{ij})_{i,j=1}^3$ and $z$ a map $\Psi:\AA^{10}\to\AA^9$ given by
$\Psi^*y_{ij}=x_{ij}$ for $(i,j)\ne (3,3)$ and $\Psi^*y_{33}=x_{33}/z$. Assume that $|J|=1$ and the only new distinguished polynomials is $\tth=z$. Once again, it is easy to check that the condition of Theorem~\ref{maincondtheo} is satisfied, so there exists a regular pullback of $\widehat\CC$ to $\AA^{10}$. Further, $\lambda_\tth(\hat f_l)=1$ for $l=1, 4, 6$  and $\lambda_\tth(\hat f_l)=0$ otherwise.  We thus get that the discrepancy equals~1 for
$m=1$ and vanishes for for $m=2,3,4$, so that  an arrow from~1 
to~10 is added. The corresponding quiver $\Psi^*\widehat Q$ is shown in  Fig.~\ref{gl3qu}c). Note that the same quiver with the arrow from~1 to~10 reversed and $z$ attached to vertex~10 replaced by $1/z$ is the seed of the standard cluster structure on the Grassmannian $\Gr(3,6)$ corresponding to the chart
\[
\begin{bmatrix}
x_{11} & x_{12} & zx_{13} & 0 & 0 & 1\\
x_{21} & x_{22} & zx_{23} & 0 & -1 & 0\\
x_{31} & x_{32} & x_{33} & 1/z & 0 & 0
\end{bmatrix}.
\]
\end{example}

\subsection{The case $d_m>1$}\label{almostquiver}
For generalized cluster structures, the vertices of $\Psi^*\widehat Q$ are exactly the same as above, and so are 
the arrows at mutable vertices of multiplicity~1. At vertices of higher multiplicity 
both $\mu_{km}(0)$ and $\mu_{km}(d_m)$ may simultaneously differ from zero.
In this situation the quiver $\Psi^*\widehat Q$ has $\mu_{km}(0)$ arrows from $k$ to $m$ and  
$\mu_{km}(d_m)$ arrows from $m$ to $k$. Second relations in~\eqref{newmufrom} and~\eqref{newmutom} show that that the difference between the number of arrows in the opposite direction behaves under mutations exactly as the number of arrows for a quiver corresponding to a generalized cluster structure, and Proposition~\ref{ypullback} remains true.
However, the actual number of arrows in each direction depends on the value of $r$ at which the maximum in the first relations 
in~\eqref{newmufrom} and~\eqref{newmutom} is attained.

The situation depends on the function $\chi_{km}(r)$, or more exactly, on the upper convex hull
$C_{km}$ of the points  $(r,\chi_{km}(r))$, $r\in[0,d_m]$. The values $\mu_{km}(0)$ and $\mu_{km}(d_m)$ are the ordinates of the intersection points of support lines to $C_{km}$ with a slope $c-\tau_{km}$, $c\in\Z$, with the lines 
$r=0$ and $r=d_m$. If every support line 
of this type passes either through $(0,\chi_{km}(0))$ or through $(d_m,\chi_{km}(d_m))$ then, similarly to the case of ordinary cluster structures, at least one of $\mu_{km}(0)$ and $\mu_{km}(d_m)$ vanishes, and the quiver 
$\Psi^*\widehat Q$ is mutated according to the usual rules.

The situation becomes more complicated when there exist support lines to $C_{km}$ with slopes as indicated above that do not pass through $(0,\chi_{km}(0))$ or $(d_m,\chi_{km}(d_m))$, in which case both $\mu_{km}(0)$ and $\mu_{km}(d_m)$ simultaneously differ from zero. 
The simplest such situation occurs when there is exactly one such line, as is shown in the next example.

\begin{figure}[ht]
\begin{center}
\includegraphics[width=7.6cm]{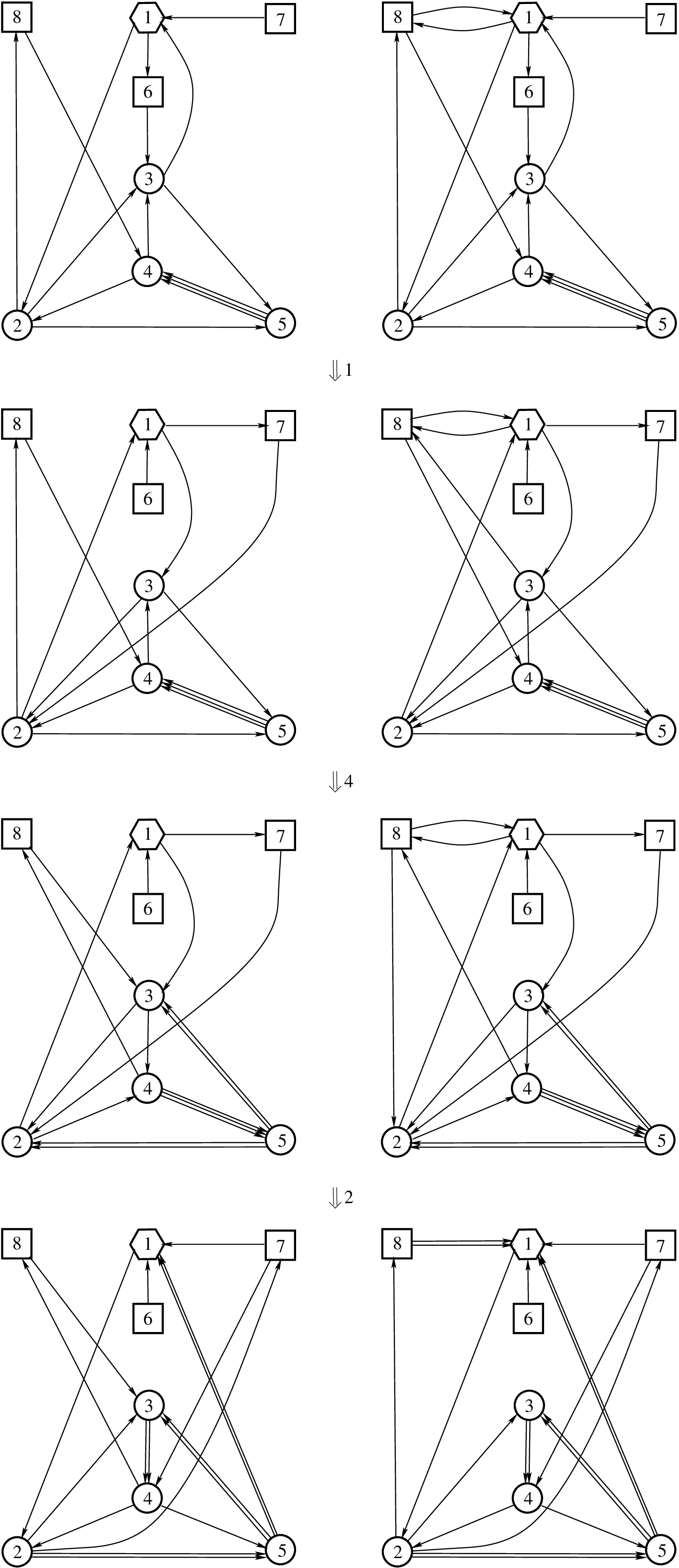}
\caption{Mutations of $Q_3$ (left) and $\widehat{Q}_3$ (right)}
\label{cgacgpestqq}
\end{center}
\end{figure}

\begin{example}\label{cgacg3mat}
We want to lift the generalized cluster structure $\GCC_3$ on $GL_3$  described in Example~\ref{cgacg3} to the whole 
$\Mat_3$. The only generalized vertex is vertex~1; it is easy to see that $C=C_{81}$ is the triangle with vertices 
$(0,0)$, $(1,1)$, and $(2,0)$, and that $\tau=\tau_{81}=1$ for the initial seed $(Q_3,F_3)$, which corresponds to the
support line with the slope~$-1$ that passes through $(2,0)$. Further, the discrepancy vanishes at all
mutable vertices, and hence the quiver $\widehat Q_3$ for the initial seed does not differ from $Q_3$ shown in Fig.~\ref{cgacgpestq}a).  The only support line to 
$C$ with an integer slope that does not pass through $(0,0)$ or $(2,0)$ is the horizontal line, in which case
$\mu_{81}(0)=\mu_{81}(2)=1$. This is exactly what happens after mutation at~2, and hence the quiver now  
looks different than the one shown in Fig.~\ref{cgacgpestq}b): it has an arrow from~1 to~8 and arrow from~8 to~1, 
see the upper right corner in Fig.~\ref{cgacgpestqq}. Let us now proceed with mutations at~1,~4, and~2. The corresponding quivers are shown in the left part of Fig.~\ref{cgacgpestqq}. The right part of the same figure shows
the quivers obtained from the one shown in the upper right corner via the same sequence of mutations. Note
the following differences between the two sets of quivers. After the first mutation, the quiver on the right keeps arrows $8\to1$ and $1\to8$ (since $\tau$ is still equal to~0), the arrow $3\to8$ is added due to the path $3\to1\to8$, and the arrow $2\to8$ disappears due to the path $8\to1\to2$. After the second mutation, 
the quiver on the right still keeps arrows $8\to1$ and $1\to8$, the arrow $8\to2$ is added due to the path $8\to4\to2$, and the arrow $3\to8$ disappears due to the path $8\to1\to3$. After the third mutation, the quiver on the right has an additional arrow $2\to8$ obtained by reversal of $8\to2$ and two additional arrows  $8\to1$ due to the path $8\to2\to1$ (note that now $\tau=-1$ and the support line to $C$ passes through $(0,0)$). At each step the quiver on the right can be obtained from the corresponding quiver on the left by the rules described above. For example, for the quiver in the last row, $\lambda_8(f_1)=\lambda_8(f_2)=1$ and $\lambda_8(f_l)=0$ for $l\ne 1,2$, so that the discrepancy
vanishes at vertex~5 and equals~1 at vertices~2 and~3. Consequently,  
 the arrow $2\to8$ is added in the quiver on the right, and the arrow $8\to3$ is cancelled.
Further, the discrepancy equals~$-1$ at vertex~4, hence the arrow $4\to8$ is cancelled in the quiver on the right. Finally, at vertex~1 the discrepancy equals~$-2$, hence two arrows $8\to1$ are added.
\end{example}

\begin{remark}
The situation when $\tau_{km}$ defined via the second equation in~\eqref{lamcoef} is an integer and all support lines with integer slopes pass either through $(0,0)$ or through $(d_m,0)$ except for the horizontal support line at height $h$ can be summarized as follows:

(i) compute the new value of $\Delta_{km}=\mu_{km}(0)-\mu_{km}(d_m)$ via the second formula in~\eqref{newmufrom} or in~\eqref{newmutom}, depending on the direction of arrows between $m$ and $i$;

(ii) if $\Delta_{km}=0$ then draw $h$ arrows from $m$ to $k$ and $h$ arrows from $k$ to $m$; if $\Delta_{km}>0$ then 
draw $\Delta_{km}$ arrows from $k$ to $m$; if $\Delta_{km}<0$ then 
draw $|\Delta_{km}|$ arrows from $m$ to $k$. 

Such a situation arises, for example, when $\lambda_{k}(p_{mr})=1$ for $r\in[1,d_m-1]$ and $\lambda_k(f_l)=0$ for 
$l\in [n+1,s]$, and hence $h=1$.
\end{remark}


The next example shows various situations that may arise in the process of regular pullback from $\AA^s$ to $\AA^r$.

\begin{example}
Assume that a generalized exchange relation in $\CC$ reads $ff'=f_1^d +\sum_{r=1}^{d-1} p_rf_1^{d-r}f_2^r+f_2^df_3$,
where $f_1$, $f_2$ are mutable and $f_3$ is frozen. 

(i) Assume additionally that there is one distinguished polynomial $\tth$ and $\lambda_{\tth}(p_r)=1$, $r\in[1,d-1]$, 
$\lambda_\tth(f_1)=\lambda_\tth(f_2)=0$, $\lambda_\tth(f_3)=1$. Then $\chi_\tth(r)$ computed via the first relation in~\eqref{lamcoef} is given via 
$\chi_\tth(r)=1-r/d$ for $r>0$, $\chi_\tth(0)=0$, and $\tau_\tth$ computed via the second relation in~\eqref{lamcoef} equals $1/d$. The corresponding upper convex hull $C_\tth$ for $d=6$ is shown in 
Fig.~\ref{convhulls}a). Support lines with the slopes $c-1/6$, $c\in\Z$, pass either through $A=(0,0)$ or through $B=(6,0)$, hence 
the arrows between the two vertices go in the same direction: either $6c+1$, $c\ge0$, arrows from the frozen vertex $k$ corresponding to $\tth$, or $-6c-1$, $c<0$, arrows to $k$.

\begin{figure}[ht]
\begin{center}
\includegraphics[width=12.5cm]{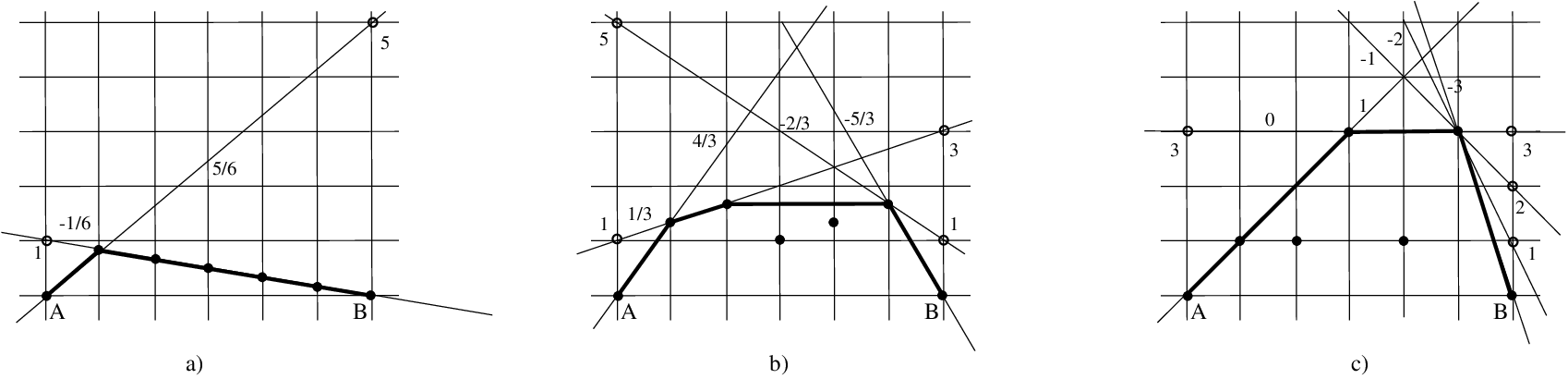}
\caption{Upper convex hulls $C_\tth$ and support lines}
\label{convhulls}
\end{center}
\end{figure}

(ii) Assume now that $\lambda_\tth(f_3)=-2$, and all the other values are the same as in the previous example. Then 
$\chi_\tth(r)=1+(2r \mod d)/d$ for $r\in[1,d-1]$, $\chi_\tth(0)=\chi_\tth(6)=0$,  
and $\tau_\tth=-2/d$. The corresponding upper convex hull $C_\tth$ for $d=6$ is shown in 
Fig.~\ref{convhulls}b). There are two support lines with the slope $c+1/3$, $c\in\Z$,  that do not pass through $A$ or $B$: the line for $c=0$ that passes through $(1,4/3)$ and $(2,5/3)$ and the line for $c=-1$ that passes 
through $(5,5/3)$. The first of the two lines passes through $(0,1)$ and $(6,3)$, so there are three arrows from $k$ and one arrow to $k$; the second line passes through $(0,5)$ and $(6,1)$, so there are five arrows to $k$ and one arrow from $k$. For $c>0$ there are $6c+2$ arrows from $k$, for $c<-1$ there are $-6c+2$ arrows to $k$.

(iii) Assume now that $d=6$,  $\lambda_{\tth}(p_r)=1$, $r=1,2,4$, $\lambda_{tth}(p_r)=3$, $r=3,5$, $\lambda_\tth(f_l)=0$, $l\in[1,3]$. Then $\chi_\tth(r)=\lambda_\tth(p_r)$, $r\in[1,5]$, 
$\chi_\tth(0)=\chi_\tth(6)=0$, $\tau_\tth=0$. The corresponding upper convex hull $C_\tth$ is shown in 
Fig.~\ref{convhulls}c). There are three support lines with the slope $c\in\Z$ that do not pass through $A$ or $B$: the line for $c=0$ that passes through $(3,3)$ and $(5,3)$, the lines for $c=-1$ and $c=-2$ that pass through $(5,3)$.
The first line passes through $(0,3)$ and $(6,3)$, so there
are three arrows in both directions, the second passes through $(0,8)$ and $(6,2)$, so there are eight arrows to $k$ and two arrows from $k$, the third line passes through $(0,13)$ and $(6,1)$, so there are thirteen arrows to $k$ and one arrow from $k$. For $c>0$ there are $6c$ arrows from $k$, for $c<-2$ there are $-6c$ arrows to $k$.
\end{example}

\begin{remark}
The results of Sections~\ref{maincond} and~\ref{combin} remain valid in the case when not all $\ttq_k$ are distinguished, that is, when the product in~\eqref{regpull} is taken over a subset $K'\subset K$ and $\hat\ttf_l$ is a regular function on the  quasi-affine variety $U\subset\AA^r$ defined by non-vanishing of $\ttq_k$, $k\in K\setminus K'$.
In this case the coherent pullback of $\CC$ is an almost-cluster structure on $U$. Note that acting as described in Section~\ref{seedpullback} we can guarantee that in the initial seed $\Psi^*\widehat\Sigma$ all $\hat\ttf_l$ are regular on the whole ambient space $\AA^{r}$; however, this is not true for the exchange coefficients, and hence the variables of the almost-cluster structure are not necessary regular on $\AA^{r}$ after generalized mutations. 
\end{remark}

\section{Compatible pullback of a compatible Poisson bracket}\label{poisson}
Recall that a Poisson bracket $\Poi$ is called {\it compatible\/} with a generalized cluster structure $\CC$ if
$\{f_l,f_m\}=\omega_{lm}f_lf_m$ whenever $f_l$ and $f_m$ belong to the same cluster. Assume that $f_l$, $l\in[n+1,s]$ are frozen variables of $\CC$ and denote 
\[
p_{mr}^*=p_{mr}^{d_m}\prod_{n<l\le s}f_l^{-d_m\{rb_{ml}/d_m\}-d_m\{(d_m-r)b_{lm}/d_m\}},
\]
where $p_{mr}$ are exchange coefficients participating in~\eqref{exchange}. The following sufficient condition of compatibility involving $p_{mr}^*$ and $y$-variables $t_m$ is given in~\cite[Proposition~2.5]{GSVdouble}.

\begin{proposition}\label{combracond}
If all $p_{mr}^*$ are Casimirs of $\Poi$, $\{f_m,t_m\}=\omega_m f_mt_m$, $\omega_m\ne0$, for $m\in[1,n]$, 
and $\{f_l,t_m\}=0$ for $l\ne m$, $l\in[1,s]$, $m\in[1,n]$, then $\Poi$ is compatible with $\CC$.
\end{proposition}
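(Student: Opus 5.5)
The plan is to prove compatibility inductively along mutation sequences. Starting from the initial seed, it suffices to show that if $\{f_l,f_j\}=\omega_{lj}f_lf_j$ holds for all pairs in a seed $\Sigma$ and the hypotheses hold, then the same log-canonical relations hold in the adjacent seed $\Sigma'$ obtained by mutation at $m$. Since only $f_m$ changes, the task reduces to showing that $\{f'_m,f_l\}$ is a multiple of $f'_mf_l$ for $l\ne m$. I would also check that the hypotheses on $t_m$ propagate to $\Sigma'$.

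For the one-step claim, I would rewrite the exchange polynomial~\eqref{exchange} through the $y$-variable. Factoring out the extreme monomial $T_0$, one gets
\[
P_m=T_0\sum_{r=0}^{d_m}p_{mr}\,\Theta_r\, t_m^{r/d_m},
\]
where $\Theta_r$ is the correction coming from the floors at frozen vertices. By construction, $p_{mr}\Theta_r$ raised to the power $d_m$ is exactly $p_{mr}^*$, up to the convention on which monomial is factored out. Thus each summand is $(p^*_{mr})^{1/d_m}t_m^{r/d_m}$ times a fixed Laurent monomial $T_0$ in $\{f_l\}_{l\ne m}$.

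Now take $l\ne m$. The monomial $T_0$ is log-canonical with $f_l$, since it is a monomial in cluster variables of $\Sigma$. Each $p^*_{mr}$ is a Casimir, and $\{f_l,t_m\}=0$ by hypothesis. Hence $\{f_l,P_m\}=c\,f_lP_m$, where $c$ is the log-canonical coefficient of $f_l$ with $T_0$. It follows that $f'_m=P_m/f_m$ satisfies $\{f_l,f'_m\}=(c-\omega_{lm})f_lf'_m$. This step needs care only to confirm that fractional powers are harmless: one can argue with $\log$-derivations, since the bracket with $f_l$ acts as a derivation and fractional powers of Casimirs remain Casimirs. The pair $(f_m,f'_m)$ does not occur in a single cluster, so it imposes no condition.

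The main obstacle is propagating the hypotheses to $\Sigma'$. First, the new $p'_{mr}=p_{m,d_m-r}$ together with the reversed arrows give $p'^*_{mr}=p^*_{m,d_m-r}$ times frozen factors, and I would verify the exponents match through the fractional-part identity used in Proposition~\ref{samefy}. At other vertices $i$, $b_{il}$ changes, and I would use the invariance argument of Proposition~\ref{samefy} to show that $p^*_{ir}$ is unchanged. Second, the $t$-variables mutate by the standard generalized $y$-mutation rule, which expresses $t'_i$ as $t_i$ times powers of $t_m$ and of $(p^*_{mr})^{1/d_m}$-weighted sums in $t_m$. Here one uses $\{f_m,t_m\}=\omega_mf_mt_m$ together with $\{f_l,t_m\}=0$ to get $\{f'_m,t_m\}=-\omega_m f'_mt_m$ and $\{f'_l,t'_i\}$ of the required diagonal form. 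Alternatively, following~\cite{GSVdouble}, I would note that these conditions say $\Poi$ restricted to the torus of the cluster is nondegenerately paired with the exchange matrix, $\Omega B^T=$ diagonal. That condition is preserved by the matrix mutation identity $B'=E BF$, $\Omega'=E^T\Omega E$, and I would carry out this matrix verification as the final step.
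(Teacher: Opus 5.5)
Your architecture (induction along mutations, writing $P_m$ as $T_0$ times a function of Casimirs and a single $y$-variable, then propagating the hypotheses) is the standard argument behind \cite[Proposition~2.5]{GSVdouble}, which the paper only cites. However, the identity your argument rests on is false for the $t_m$ displayed in the paper when $d_m>1$. From \eqref{exchange} one gets $T_r=T_0\,(p^*_{mr})^{1/d_m}\,\hat t_m^{\,r/d_m}$, where
\[
\hat t_m=\frac{T_{d_m}}{T_0}=\prod_{l=1}^n f_l^{d_m(b_{ml}-b_{lm})}\prod_{l=n+1}^s f_l^{b_{ml}-b_{lm}}=\prod_{l=1}^s f_l^{\hat b_{ml}}
\]
(this defines $\hat b_{ml}$). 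So $\hat t_m$ differs from $t_m$ by the factor $\prod_{l\le n}f_l^{(d_m-1)(b_{ml}-b_{lm})}$. Your one-step argument needs $\{f_l,\hat t_m\}=0$ for $l\ne m$, and this does not follow from $\{f_l,t_m\}=0$.

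In fact, read literally, the statement fails. Take mutable vertices $1,2$ with $d_1=2$, $d_2=1$, one arrow $1\to2$, two arrows $1\to4$, frozen vertices $3,4$, $p_{11}=1$. Take the constant log-canonical bracket with $\omega_{12}=a\ne0$, $\omega_{23}=2c$, $\omega_{34}=c\ne0$, and all other $\omega_{ij}=0$. Then $t_1=f_2f_4^2$, $t_2=f_1^{-1}$ and $p^*_{11}=1$, and all hypotheses hold with $\omega_1=\omega_2=a$. Yet $P_1=1+f_2f_4+f_2^2f_4^2$ and $\{\log f_3,\log(f_2f_4)\}=-c$, so $f_3$ and $f_1'$ are not log-canonical.

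You must therefore state and use the hypothesis for $\hat t_m$, which is what the paper intends (compare the factor $d_m$ in \eqref{newarrows}). With that reading, and with the tacit assumption that the initial extended cluster is log-canonical, your one-step argument is correct.

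The second gap is the propagation step, which you only promise. It has to be carried out with the matrix $\hat B=(\hat b_{ml})$, not with the arrow matrix. It is $\hat B$ that mutates by the ordinary Fomin--Zelevinsky rule; that is exactly the content of the multiplicity rule for quiver mutation. Put $\Delta_m=\{\log f_m,\log\hat t_m\}$ and $S=\sum_r(p^*_{mr})^{1/d_m}\hat t_m^{\,r/d_m}$, and use $\hat t_i'=\hat t_i\,\hat t_m^{[\hat b_{im}]_+}S^{-\hat b_{im}}$. Then for $i\ne m$ one finds
\[
\{\log f_m',\log\hat t_i'\}=\Delta_i[-\hat b_{mi}]_+-\Delta_m[\hat b_{im}]_++\frac{\partial\log S}{\partial\log\hat t_m}\bigl(\Delta_i\hat b_{mi}+\Delta_m\hat b_{im}\bigr).
\]
The nonconstant term vanishes only because $\{\log\hat t_i,\log\hat t_m\}=\hat b_{im}\Delta_m$, together with antisymmetry of the bracket, forces $\Delta_i\hat b_{mi}=-\Delta_m\hat b_{im}$. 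The constant term then vanishes because $\hat b_{im}$ and $\hat b_{mi}$ have opposite signs. This identity is the real input of the propagation step and is absent from your sketch; the same $(i,m)$ entries are the only nontrivial ones in your matrix route via $\hat B'\Omega'$.

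Your remaining checks are fine: $p'^*_{mr}=p^*_{m,d_m-r}$, invariance of $p^*_{ir}$ for $i\ne m$ via the fractional-part argument of Proposition~\ref{samefy}, and $\{\log f_m',\log\hat t_m'\}=\Delta_m$.
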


Assume that $\AA^s$ is equipped with a Poisson bracket $\Poi$ that satisfies conditions of 
Proposition~\ref{combracond}, and that there exists a coherent pullback $\widehat\CC$ of $\CC$. Let $\Poip$ be a Poisson bracket on $\AA^r$ that makes the map $\Psi\: (\AA^r,\Poip)\to (\AA^s,\Poi)$ Poisson. Further, let $\tilde\ttt_m$ denote the $y$-variable at vertex $m$ of the quiver $\Psi^*Q$.

\begin{theorem}\label{compbra}
If all $\ttp_{mr}^*$ are Casimirs of $\Poip$, $\{\tth_j,\tilde\ttt_m\}_{\rm p}=0$ for all $j\in J$ and $m\in[1,n]$, and the set
$\ttF\cup\ttH$ forms a log-canonical basis for $\Poip$ then the
bracket $\Poip$ is compatible with $\widehat\CC$.
\end{theorem}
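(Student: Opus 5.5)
The plan is to apply Proposition~\ref{combracond} to the almost-cluster structure $\widehat\CC$ on $\AA^r$ in the seed $\Psi^*\Sigma$. The exchange relations~\eqref{pullex} have the generalized form, with extra monomials in the $\ttq_k$. The $y$-variables $\tilde\ttt_m$ of $\Psi^*Q$ agree with the pullbacks $\Psi^*t_m$ by Proposition~\ref{ypullback}. The proof of Proposition~\ref{combracond} in~\cite{GSVdouble} uses only three things: the exchange relation can be written as $f_mf_m'=f_m^{\rm in}\cdot\Phi(t_m, p^*)$-type expressions, the coefficients are Casimirs, and $t_m$ is log-canonical with the cluster. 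So I would rerun that argument for~\eqref{pullex}. This shows that whenever $\Poip$ is log-canonical on one cluster $\ttF^\Sigma\cup\ttH$ and the hypotheses hold at $\Sigma$, then $\Poip$ is log-canonical on the adjacent cluster and the hypotheses propagate.

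\textbf{Step 1.} First I would verify the hypotheses of Proposition~\ref{combracond} for $\Poip$ at the initial seed. Log-canonicity of $\ttF\cup\ttH$ is assumed, and the coefficients $\ttp^*_{mr}$ are Casimirs by assumption. Since $\Psi$ is Poisson, $\{\Psi^*f_l,\Psi^*t_m\}_{\rm p}=\Psi^*\{f_l,t_m\}$. Now $\Psi^*f_l=\ttf_l\prod_j\tth_j^{-\lambda_j(f_l)}$ and $\Psi^*t_m=\tilde\ttt_m$. Using $\{\tth_j,\tilde\ttt_m\}_{\rm p}=0$ and the Leibniz rule in log form, $\{\log\ttf_l,\log\tilde\ttt_m\}_{\rm p}=\{\log\Psi^*f_l,\log\Psi^*t_m\}_{\rm p}$. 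This is $0$ for $l\ne m$ and $\omega_m$ for $l=m$.

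\textbf{Step 2.} Next I would do the mutation step. Divide~\eqref{pullex} by its $r=0$ term. The ratio of consecutive terms $\hat\ttT_{r+1}\ttq^{\mu(r+1)}/\hat\ttT_r\ttq^{\mu(r)}$ is, by~\eqref{mutr} and the computation in~\eqref{lamcoef}, equal to $\tilde\ttt_m^{1/d_m}$ times a ratio of Casimir-type coefficients. The point is that the $\ttq_k$-powers are exactly absorbed, since $\tilde\ttt_m$ is the pullback of $t_m$ rewritten over $\ttF\cup\ttH$. Hence $\hat\ttf_m'=\hat\ttf_m^{-1}\cdot(\text{initial monomial})\cdot\Phi$, where $\Phi$ is a polynomial in $\tilde\ttt_m^{1/d_m}$ with Casimir coefficients. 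For $l\ne m$, $\tilde\ttt_m$ log-commutes with $\ttf_l$ and with $\ttH$, so $\Phi$ commutes with them. Therefore $\{\ttf_l,\hat\ttf_m'\}_{\rm p}$ is a constant multiple of $\ttf_l\hat\ttf_m'$.

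\textbf{Step 3.} Finally, I would show that the hypotheses propagate to the new seed. The $\ttp^*$ remain Casimirs because $p'_{mr}=p_{m,d_m-r}$ and the frozen factors are unchanged. The new $y$-variables are again pullbacks of $y$-variables of $\CC$, so the identities $\{\ttf_l,\tilde\ttt_m\}$ follow from those in $\CC$ as in Step 1. This requires $\{\tth_j,\tilde\ttt'_m\}_{\rm p}=0$ in the new seed. I expect that to be the main obstacle. I would try to derive it from $\tilde\ttt'_i$ being a Laurent monomial in the old $\tilde\ttt$'s and cluster variables: $\{\tth_j,\ttf_l\}$ is controlled by log-canonicity, and $\tth_j$ commutes with the Casimirs. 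If this fails in general, I would read the hypothesis as holding in every seed.
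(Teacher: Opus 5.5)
Your Step~1 is exactly the paper's proof. The paper expands $\{\ttf_l,\tilde\ttt_m\}_{\rm p}$ by the Leibniz rule and kills the $\tth_j$-factor with $\{\tth_j,\tilde\ttt_m\}_{\rm p}=0$. It then uses $\tilde\ttt_m=\Psi^*t_m$ (Proposition~\ref{ypullback}) and the Poisson property of $\Psi$, and finally invokes Proposition~\ref{combracond}. That proposition is a criterion at a \emph{single} seed, so propagation to all other seeds is part of what is being cited.

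Your Steps~2--3 unpack what that citation requires for the non-standard relations~\eqref{pullex}, which the paper leaves implicit. Step~2 is fine: by definition $\hat\ttT_r\prod_k\ttq_k^{\mu_{km}(r)}=\prod_k\ttq_k^{\Lambda_k(P_m)}\,\Psi^*T_r$. So after dividing by the $r=0$ term you get $\Psi^*\Theta_m$, with $\Theta_m=P_m/T_0=\sum_r(p^*_{mr})^{1/d_m}t_m^{r/d_m}$ in the exchange-matrix convention.

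A small point in Step~3: invariance of $p^*_{mr}$ under mutations at $i\ne m$ is not because ``the frozen factors are unchanged''. The exponents involve $b_{ml},b_{lm}$, which do change; invariance follows from the fractional-part computation in the proof of Proposition~\ref{samefy}. Mutation at $m$ sends $p^*_{mr}$ to $p^*_{m,d_m-r}$.

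\textbf{The gap in Step~3.} You leave the key point open, and neither of your suggestions closes it. Assuming $\{\tth_j,\tilde\ttt_m\}_{\rm p}=0$ in every seed changes the theorem. Expressing $\tilde\ttt'_i$ through old $y$-variables \emph{and cluster variables} would not give zero either: log-canonicity only gives $\{\tth_j,\ttf_l\}_{\rm p}=\omega_{jl}\tth_j\ttf_l$, with $\omega_{jl}$ generally nonzero. Moreover, $\tilde\ttt'_i$ is not a Laurent monomial.

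What you need is the $y$-mutation rule in $\CC$. One has $t'_m=t_m^{-1}$, and for $i\ne m$, $t'_i=t_i\,t_m^{\max(b_{im},0)}\,\Theta_m^{-b_{im}}$, where $b_{im}$ are entries of the skew-symmetrizable exchange matrix. No bare cluster variables survive. The factor $T_0^{-b_{im}}$ coming from $(f'_m)^{-b_{im}}$ combines with the arrows created by paths through $m$ into a power of $t_m$.

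The arrow differences of $\Psi^*Q$ mutate by the standard rule, and Proposition~\ref{ypullback} persists along mutations (Section~\ref{almostquiver}). Hence $\tilde\ttt'_i=\Psi^*t'_i$ is a function of $\tilde\ttt_i$, $\tilde\ttt_m$ and the Casimir coefficients of Step~2. Therefore $\{\tth_j,\tilde\ttt'_i\}_{\rm p}=0$ follows from the initial-seed hypothesis.

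Your Step~3 also uses the identities $\{f_l,t_m\}=\delta_{lm}\omega_mf_lt_m$ at every seed of $\CC$. These come from the same computation carried out in $\CC$, or equivalently from the standard fact that compatibility of the log-canonical matrix with the exchange matrix is preserved by mutation. With this, your argument becomes a complete and more explicit version of the paper's.
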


\begin{proof}
We claim that the conditions of the Theorem imply conditions of Proposition~\ref{combracond}. Indeed, 
\begin{multline*}
\{\ttf_l,\tilde\ttt_m\}_{\rm p}=\{\Psi^*f_l\prod_{j\in J}\tth_j^{\lambda_j(f_l)},\tilde\ttt_m\}_{\rm p}\\
=\left\{\prod_{j\in J}\tth_j^{\lambda_j(f_l)},\tilde\ttt_m\right\}_{\rm p}\Psi^*f_l+
\{\Psi^*f_l,\tilde\ttt_m\}_{\rm p}\prod_{j\in J}\tth_j^{\lambda_j(f_l)}.
\end{multline*}
By the conditions of the Theorem, the first term above vanishes. Further, 
by Proposition~\ref{ypullback}, $\tilde\ttt_m=\Psi^*t_m$, so that $\{\Psi^*f_l,\tilde\ttt_m\}_{\rm p}=\Psi^*\{f_l,t_m\}$,
therefore the second term above vanishes for $l\ne m$ and equals  $\omega_m \ttf_m\tilde\ttt_m$ for $l=m$, as required
by  Proposition~\ref{combracond}.
\end{proof}

\section{Regular pullback of the upper cluster algebra}\label{ucla}
Assume that there exists a coherent pullback of the (generalized) cluster structure $\CC$ on $V$ to $\AA^s$. One can define the regular pullback $\widehat{\A}$ of the
upper cluster algebra $\UU$ as the intersection of the rings of Laurent polynomials in almost-cluster variables 
over the ground ring of polynomials in frozen variables  
taken over all almost-clusters. The relation between $\widehat{\A}$ and $\UU$ under very mild conditions is given by the following statement.

\begin{theorem}\label{pullbackucla}
Assume that any seed of $\CC$ is coprime, then $\UU$ is the localization of $\widehat{\A}$ by the multiplicative set generated by $G=\{g_i\}_{i\in I}$.
\end{theorem}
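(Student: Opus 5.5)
The plan is to compare the two intersections seed by seed, and then to use coprimality to reduce the infinite intersection defining $\widehat\A$ to a finite one (an ``upper bound'' argument). Here $\Psi$ is trivial, so $\ttH=\varnothing$ and $K=I$.

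\emph{Step 1: seedwise comparison.} For a seed $\Sigma$ of $\CC$, let $L_\Sigma$ be the ring of Laurent polynomials in the mutable variables of $\Sigma$ over the ground ring generated by the frozen variables and by $g_i^{\pm1}$, $i\in I$. Let $\widehat L_\Sigma$ be the analogous ring for the almost-cluster: Laurent in $\hat f_l^\Sigma$, and polynomial in the hatted frozen variables and in the $g_i$. By~\eqref{regonV}, every $f_l^\Sigma$ equals $\hat f_l^\Sigma$ times a Laurent monomial in the $g_i$. Hence $L_\Sigma=\widehat L_\Sigma[G^{-1}]$.

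Localization commutes with finite intersections, and an element of $\widehat\A$ lies in every $\widehat L_\Sigma$. This gives
\[
\widehat\A[G^{-1}]\subseteq\bigcap_\Sigma \widehat L_\Sigma[G^{-1}]=\bigcap_\Sigma L_\Sigma=\UU .
\]
This inclusion is routine.

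\emph{Step 2: reduction of the reverse inclusion to an upper bound.} Take $u\in\UU$. For each fixed $\Sigma$ there is a monomial $g^N=\prod_i g_i^{N_i}$ with $g^Nu\in\widehat L_\Sigma$. The difficulty is that $N$ must be chosen uniformly over infinitely many seeds.

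Fix the initial seed $\Sigma_0$ and its $n$ neighbours $\Sigma_1,\dots,\Sigma_n$, and choose $N$ so that $g^Nu$ lies in the upper bound
\[
\widehat U(\Sigma_0)=\widehat L_{\Sigma_0}\cap\bigcap_{m=1}^n\widehat L_{\Sigma_m}.
\]
This is possible because only finitely many rings are involved. It then suffices to prove that $\widehat U(\Sigma)$ is the same ring for all $\Sigma$, since this would give $g^Nu\in\widehat\A$.

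\emph{Step 3: invariance of the upper bound under mutation (the main obstacle).} I would follow the proof of~\cite[Theorem~1.5]{BFZ} (and its generalized version in~\cite{GSVdouble}) showing $U(\Sigma)=U(\Sigma')$ for adjacent seeds. That proof uses only three properties of the exchange relations:
\begin{enumerate}[(a)]
\item $\hat f_m'=\hat P_m/\hat f_m$ with $\hat P_m$ a polynomial in the other variables of the cluster;
\item $\hat P_m$ is not divisible by any cluster variable;
\item the exchange polynomials of the seed are pairwise coprime, together with the rank-2 compatibility between $P_j$ and its mutation $P_j'$ at $m$.
\end{enumerate}
It does not use the specific binomial or coprime-extreme-monomial shape.

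Property (a) is exactly the coherence condition $M=1$, via~\eqref{pullex}. For (b) and (c) I would argue as follows. Since $\Sigma$ is coprime, the $P_m$ are pairwise coprime. Clearing powers of the $g_i$ cannot create common factors among the $\tilde\ttP_m$ in the polynomial ring over $\C[G]$, because each $\tilde\ttP_m$ is, by construction, not divisible by any $g_i$ (this is where Lemma~\ref{onej}-type reasoning enters). Hence the $\tilde\ttP_m$ remain pairwise coprime and are not divisible by any variable.

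The delicate point is the rank-2 step of the BFZ argument. There one must show that $\hat P_j'$ and $\hat P_j|_{\hat f_m=\hat P_m/\hat f_m'}$ agree up to a unit, and this identification now involves monomials in the $g_i$ with the exponents $\mu_{km}(r)$. I would handle it through Proposition~\ref{newmu}: the second relations in~\eqref{newmufrom} and~\eqref{newmutom} give the correct change of $\mu_{km}(d_m)-\mu_{km}(0)$ under mutation, and the normalization $\min_r\mu_{km}(r)=0$ ensures that no spurious power of $g_i$ divides the substituted polynomial. If this works, BFZ's argument goes through verbatim with the ground ring $\C[\text{frozen},G]$ replacing the localized one, completing the proof.
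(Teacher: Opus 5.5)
Your Steps~1 and~2 reproduce the paper's proof: its $\bar M=M\prod_mM_m$ is your $g^N$. Your argument that the $\tilde\ttP_m$ stay pairwise coprime is the paper's closing remark. After that remark the paper simply cites \cite[Theorem~1.5]{BFZ} for the seeds of $\widehat\CC$ and does not re-examine the rank-2 step.

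As written, however, your Step~3 is only conditional, and the identity you plan to verify is false even for ordinary cluster algebras. In coefficient-free type $A_2$, with $x_1x_1'=1+x_2$ and $x_2x_2'=1+x_1$, mutation at~1 gives $P_2'=1+x_1'$. But $P_2|_{x_1=P_1/x_1'}=(1+x_2+x_1')/x_1'$ is not associate to it. What does hold is that $P_2'$ agrees, up to a monomial, with $P_2|_{x_1=P_1|_{x_2=0}/x_1'}$. After lifting, that monomial may contain powers of the $g_i$, which are not units in $\C[\text{frozen variables},G]$. So your claim that BFZ's argument goes through verbatim is exactly the unproved point.

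You can close the gap more simply, bypassing upper bounds and even the coprimality hypothesis. Claim: if $\Sigma'$ is obtained from $\Sigma$ by mutation in direction $j$, then $\widehat L_\Sigma\cap L_{\Sigma'}\subseteq\widehat L_{\Sigma'}$.

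To see this, let $\widehat B$ be the ring of Laurent polynomials in the mutable $\hat f_l$, $l\ne j$, over $\C[\text{frozen variables},G]$, and let $B=\widehat B[G^{-1}]$. Then $\widehat L_\Sigma=\widehat B[\hat f_j^{\pm1}]$ and $L_{\Sigma'}=B[(\hat f_j')^{\pm1}]$. Write $v=\sum_kc_k\hat f_j^{\,k}$ with $c_k\in\widehat B$, and substitute $\hat f_j=\tilde\ttP_j/\hat f_j'$. Since $\hat f_j'$ is transcendental over $\operatorname{Frac}(B)$, uniqueness of Laurent coefficients gives $c_k\tilde\ttP_j^{\,k}\in B$ for all $k$. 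For $k<0$ this means $c_k=\tilde\ttP_j^{\,|k|}e_k$ with $e_k\in B$. Now $\widehat B$ is a UFD in which each $g_i$ is prime and $g_i\nmid\tilde\ttP_j$, so in fact $e_k\in\widehat B$. Thus $v\in\widehat B[(\hat f_j')^{\pm1}]=\widehat L_{\Sigma'}$.

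Now take $u\in\UU$ and choose $N$ with $g^Nu\in\widehat L_{\Sigma_0}$ for the initial seed alone. Since $g^Nu\in\UU\subseteq L_\Sigma$ for every $\Sigma$, the claim propagates $g^Nu\in\widehat L_\Sigma$ along any mutation sequence, so $g^Nu\in\widehat\A$. This removes your uniformity concern in Step~2 directly. It uses only coherence: the relation $\hat f_j\hat f_j'=\tilde\ttP_j$ with $g_i\nmid\tilde\ttP_j$, and algebraic independence of each almost-cluster.
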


\begin{proof}
Denote $R=\C[\{g_i^{-1}\}_{i\in I}]$. 
To prove the inclusion of the localization of $\widehat{\A}$ in $\UU$ is enough to check that $zh\in\UU$ for any 
$z\in R$ and any $h\in \widehat{\A}$. By definition, in any seed of $\widehat\CC$, $h$ can be written as a Laurent polynomial in almost-cluster variables, which, in turn, are the cluster variables of the corresponding seed of 
$\CC$ multiplied by a Laurent monomial in $G$. It follows that $h\in\UU$. The claim concerning $zh$ follows from the fact that $R$ is a subring of the ground ring of $\UU$.

To prove the opposite inclusion, consider an arbitrary $p\in\UU$. It can be written as a Laurent polynomial in the initial seed $\Sigma$ of $\C$. Consequently, the corresponding Laurent polynomial in the initial seed $\widehat\Sigma$ of 
$\widehat\CC$ differs from $p$ by a Laurent monomial in $G$. It follows that there exists a monomial
$M$ in $G$ such that $Mp$ is a Laurent polynomial in  $\widehat\Sigma$. In a similar way, for any adjacent
seed $\Sigma_m$ there exists a monomial $M_m$ such that $M_mp$ is a Laurent polynomial in  $\widehat\Sigma_m$. Thus, for
$\bar M=M\prod_mM_m$, $\bar Mp$ is simultaneously a Laurent polynomial in $\widehat\Sigma$ and all $\widehat\Sigma_m$. It remains to note that if $P,Q\in\C[\{f_l\}_{l=1}^s, \{g_i^{-1}\}_{i\in I}]$ are coprime then the corresponding 
$\tilde P,\tilde Q$ defined in~\eqref{liftedexch} are coprime as well, and hence any seed of $\widehat\CC$ is coprime, so that the upper bound defined by $\widehat\Sigma$ coincides with the whole $\widehat\A$, see~\cite[Theorem~1.5]{BFZ}.
\end{proof}

The following more restrictive conditions guarantee that the regular pullback to $\AA^s$ of a complete (generalized) cluster structure on $V$ is complete.

\begin{theorem}\label{pullbackcomp}
Assume that there exists a seed $\Sigma=(Q,F\cup G)$ of a regular complete generalized cluster structure $\CC$ on $V$ such that for every $i\in I$ each $y_m$, $m\in[1,s]$, can be written as a rational expression in $\widehat F\cup G\setminus\{g_i\}$ at a generic point of $D_i=\{g_i=0\}$. Then

{\rm (i)} there exists a coherent pullback $\widehat\CC$ of $\CC$ to $\AA^s$;

{\rm (ii)} rational expressions for $y_m$ are Laurent polynomials that do not contain frozen variables in $\widehat F$ in the denominator;

{\rm (iii)} if additionally these rational expressions do not contain variables in $G$ in the denominator then
$\widehat\CC$ is complete.
\end{theorem}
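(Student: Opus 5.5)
Here $\Psi=\Id$, $J=\varnothing$, and the distinguished polynomials are the $g_i$. For (i) I will verify the hypothesis of Theorem~\ref{maincondtheo} for the seed $\Sigma$. Fix $i\in I$. The family $\widehat F\cup G$ has $s$ elements, so $(\widehat F\cup G)\setminus\{g_i\}$ has $s-1$ elements. The divisor $D_i$ has dimension $s-1$. By hypothesis, the coordinate functions $y_1,\dots,y_s$ restricted to $D_i$ lie in the field generated by these $s-1$ restrictions. Hence this field has transcendence degree $s-1$, and the $s-1$ generators must be algebraically independent on $D_i$. (This is needed only on an open dense subset, and the hypothesis gives it at a generic point.) Theorem~\ref{maincondtheo} then yields the coherent pullback. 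Coherence is also propagated to every seed by the second lemma in its proof.

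For (ii), write the expression for $y_m$ as $A/B$ with $A,B$ polynomials in $\widehat F\cup G$. Completeness of $\CC$ gives $y_m\in\UU$. By Theorem~\ref{pullbackucla}, or directly by the Laurent phenomenon in the seed $\Sigma$ of $\CC$, $y_m$ is a Laurent polynomial in the cluster variables $f_l$ with polynomial dependence on the frozen ones and denominators in $G$. Substituting $f_l=\hat f_l\prod g_i^{-\lambda_i(f_l)}$ turns this into a Laurent polynomial in $\widehat F\cup G$, with only mutable $\hat f_l$ and $g_i$ in denominators. Now use uniqueness. Since $\widehat F\cup G$ is algebraically independent on $\AA^s$, the field $\C(\widehat F\cup G)$ is purely transcendental and any expression of $y_m$ as a rational function in these variables is unique. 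So the given rational expression coincides with this Laurent polynomial. On $D_i$ the restriction is again unique by the independence established in (i), so the generic rational expression on $D_i$ is the restriction of the same Laurent polynomial.

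For (iii), I need every $y_m\in\widehat\A$, i.e.\ a Laurent polynomial in every seed of $\widehat\CC$ with polynomial frozen part. In $\widehat\Sigma$ this is (ii) together with the extra assumption that no $g_i$ appears in a denominator. For the other seeds I use coprimality as in the proof of Theorem~\ref{pullbackucla}: the upper bound of $\widehat\Sigma$ equals $\widehat\A$, so it suffices to check $\widehat\Sigma$ and its neighbours $\widehat\Sigma_m$. For a neighbour, $y_m$ is Laurent in $\widehat\Sigma_m$ up to a monomial in $G$, by the same argument as in (ii) applied to $\Sigma_m$. I then remove possible $g_i$ denominators using the divisor-theoretic description: a function regular on $V$ has a pole along $D_i$ of order determined by its restriction behaviour. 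Since $y_m$ is a polynomial it has no pole along $D_i$. In the pulled-back exchange relations~\eqref{pullex} no new $g_i$ appears in denominators because $M=1$. So any $g_i$ in a denominator of the expansion in $\widehat\Sigma_m$ would have to be cancelled, and I would argue this via the independence on $D_i$.

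The main obstacle is this last step. Without Laurentness in $g_i$, the expansion multiplied by a large power of $g_i$ restricts to $D_i$ to give a nontrivial algebraic relation among $(\widehat F^{\Sigma_m}\cup G)\setminus\{g_i\}$. That contradicts the independence propagated by the second lemma in the proof of Theorem~\ref{maincondtheo}. Consequently $\C[\AA^s]\subseteq\widehat\A$. The reverse inclusion follows from regularity of the coherent pullback, since all almost-cluster variables are polynomials and the ground ring is polynomial in frozen variables. This gives completeness.
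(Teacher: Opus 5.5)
Your parts (i) and (ii) follow the paper's argument. For (iii) you take a genuinely different route.

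\textbf{The paper's route.} The paper assumes that the Laurent expansion of $y_m$ has no frozen variable in the denominator at one seed; this is the extra hypothesis $L=1$. It then propagates this property across each mutation by unique factorization: writing $y=QM/\hat f^k$ and $y=Q'M'$, substituting $\hat f=P/\hat f'$, and comparing monomial factors in the polynomial ring $R'$ forces $M'=(\hat f')^{k-m}M$. This is a purely algebraic induction along mutations, and it needs the extra hypothesis as its base case.

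\textbf{Your route.} You instead use, at every seed $\Sigma'$, the algebraic independence of $(\widehat F^{\Sigma'}\cup G)\setminus\{g_i\}$ on $D_i$, which the second lemma in the proof of Theorem~\ref{maincondtheo} supplies. Suppose the expansion $N/(\text{monomial})$ of $y_m$ in $\widehat\Sigma'$ had $g_i$ in the denominator, with $N$ not divisible by $g_i$. Since $y_m$ and all almost-cluster variables are polynomials, clearing denominators and restricting to $D_i$ makes the nonzero polynomial $N|_{g_i=0}$ vanish on $D_i$, a contradiction.

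This argument is correct, and it proves more than the paper states. Applied to $\widehat\Sigma$ itself it already gives $L=1$, so under the hypotheses of the theorem the additional assumption in (iii) holds automatically. The same argument is also what justifies your claim at the end of (ii) that the expression on $D_i$ is the restriction of the global Laurent polynomial. That claim presupposes $g_i$ is absent from the denominator, so the independence argument should be placed before it.

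\textbf{Steps to delete.} Two steps of your write-up should be removed.
\begin{enumerate}
\item The reduction to $\widehat\Sigma$ and its neighbours via the upper bound theorem uses coprimality of all seeds. That is a hypothesis of Theorem~\ref{pullbackucla}, not of this theorem. It is also unnecessary, since your independence argument works in every seed directly.
\item The assertion that $\widehat\A\subseteq\C[\AA^s]$ follows because almost-cluster variables are polynomials is not valid as stated. Being a Laurent polynomial in polynomial variables in every seed does not imply polynomiality. You would need a codimension-two argument based on coprimality of the almost-cluster variables in a seed and its neighbours. The paper's proof of (iii) only establishes that every $y_m$ lies in $\widehat\A$, i.e.\ $\C[\AA^s]\subseteq\widehat\A$, so you can stop there.
\end{enumerate}
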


\begin{proof}
(i) It is sufficient to check that the conditions of Theorem~\ref{maincondtheo} are satisfied. Indeed, since the size of the family $\widehat F\cup G\setminus\{g_i\}$ equals the dimension of $D_i$, any algebraic dependence would make impossible to restore $y_m$ at a generic point of $D_i$. 

(ii) Since $\CC$ is complete, every $y_m$ can be written as a Laurent polynomial in any seed $\Sigma$ of $\CC$ with coefficients that are polynomials in frozen variables in $F$ and Laurent polynomials in $G$. This expression can be rewritten in the corresponding seed $\widehat\Sigma$ as a Laurent polynomial with coefficients that are polynomials in frozen variables in $\widehat F$ and $G$ divided my a monomial $L$ in $G$.

(iii) Assume that $L=1$ at some seed $\widehat\Sigma$. We claim that the same holds for any seed 
$\widehat\Sigma'$. Clearly, it is enough to prove that the Laurent representation in the adjacent seed 
$\widehat\Sigma$ obtained  by replacing   an almost-cluster variable $\hat f=\hat f_l$ by $\hat f'$ has this property. 
Let $R$ and $R'$ be the rings of polynomials in the variables of $\widehat\Sigma$ and $\widehat\Sigma'$, respectively. Assume that $y=QM/\hat f^k$, where $k\in\Z$, $Q\in R$ is  not divisible by a monomial and $M$ is a Laurent monomial in almost-cluster variables of $\widehat\Sigma$ excluding $\hat f$ and not containing frozen variables in the denominator. Further, let $y=Q'M'$, where $Q'\in R'$ is not divisible by a monomial and $M'$ is a Laurent monomial in variables of 
$\widehat\Sigma'$. Recall that $\hat f$ and $\hat f'$ satisfy the exchange relation $\hat f\hat f'=P$, where 
$P\in R\cap R'$.
Let $m$ be the minimal non-negative integer such that $\tilde Q=(\hat f')^m \left.Q\right|_{\hat f=P/\hat f'}\in R'$, then
$\tilde Q (\hat f')^{k-m}M=Q'M' P^k$ is an identity in $R'$. Since variables in $\widehat\Sigma'$ are algebraically independent, this identity is only possible if $\tilde Q=Q' P^k$ and $M'=(\hat f')^{k-m}M$, therefore, $M'$ retains the property of not containing frozen variables in the denominator.

Consequently, every $y_m$ can be written as a Laurent polynomial over the ring of polynomials in frozen variables in $\widehat F$ and $G$ at any almost-cluster $\widehat\Sigma'$, so that $\widehat\A$ is complete.
\end{proof}

\begin{example}
Consider the generalized almost-cluster structure $\widehat\GCC_3$ described in Example~\ref{cgacg3mat}. 
A direct check with Maple shows that conditions of Theorem~\ref{pullbackcomp} are satisfied, hence $\widehat\GCC_3$ is complete.
\end{example}

\end{document}